\newtheorem{thm}{Theorem}[section]
\newtheorem{cor}[thm]{Corollary}
\newtheorem{lem}[thm]{Lemma}
\newtheorem{prop}[thm]{Proposition}
\theoremstyle{definition}
\theoremstyle{remark}
\numberwithin{equation}{section} 
\newtheorem{ex}[thm]{Example}
\newcommand{\bbM}{\mathbb{M}}
\newcommand{\mcG}{\mathcal{G}}
\newcommand{\mcC}{\mathcal{C}}
\newcommand{\mcD}{\mathcal{D}}
\newcommand{\bfg}{\mathbf{g}}
\newcommand{\bfa}{\mathbf{a}}
\newcommand{\bfx}{\mathbf{x}}
\newcommand{\uHom}{\underline{\Hom}}
\newcommand{\Ob}{\operatorname{Ob}}
\newcommand{\id}{\operatorname{id}}
\newcommand{\Hom}{\operatorname{Hom}}
\newcommand{\oH}{\operatorname{H}}
\newcommand{\oC}{\operatorname{C}}
\newcommand{\act}[2]{{}^{#1}\!{#2}}
\newcommand{\ad}{\operatorname{Ad}}
\def\vvdots{\setbox0=\hbox{$\vdots$}\ht0=8pt\box0}
\def\Un{ \mbox{Un} }
\def\bX{ \bar{X}}
\def\Ext{{ \mbox{Ext} }}
\def\Spec{{ \mbox{Spec} }}
\def\Hom{{ \mbox{Hom} }}
\def\ra{{ \rightarrow }}
\def\a{{ \alpha }}
\def\b{{ \beta} }
\def\g{{ \gamma }}
\def\d{{ \delta }}
\def\F{ {\mathbb F} }
\def\hZ{ \hat{\Z}}
\def\Un{ \mbox{Un} }
\def\z{\zeta}
\def\C{{ \mathbb{C} }}
\def\bs{ \backslash}
\def\G{{ \Gamma }}
\def\Gal{{ \mbox{Gal} }}
\def\bF{{ \bar{F} }}
\def\Z{{ \mathbb{Z}}}
\def\bq{\begin{quote}}
\def\eq{\end{quote}}
\def\Aut{ \mbox{Aut}}
\def\Q{\mathbb{Q}}
\def\invlim{\varprojlim}
\def\P{ \mathbb{ P}}
\def\be{\begin{equation}}
\def\ee{\end{equation}}
\def\bF{ \bar{F}}
\def\tf{ \tilde{f}}
\def\bF{ \bar{F}}
\def\b{ \bar}
\def\L{\Lambda}
\def\a{\alpha}
\def\b{\beta}
\def\bs{\begin{slide}}
\def\es{\end{slide} }
\def\Gm{\mathbb{G}_m}
\numberwithin{equation}{section}
\def\ad{\mbox{ad}}
\def\Ker{\mbox{Ker}}
\def\beq{\begin{equation}}
\def\eeq{\end{equation}}
\def\O{\mathcal{O}}
\def\Ad{\mbox{Ad}}
\def\ext2et{\Ext^2_{\Un(\bX)}}
\def\ms{\medskip}
\def\cK{{\mathcal K}}
\def\inv{\mbox{inv}}
\def\bF{\bar{ F}}
\def\<{\langle}
\def\>{\rangle}
\def\Zn{\Z/n\Z}
\def\xx{{\times }}
\def\msloc{\mathcal{M}^S_{loc}}
\def\nZ{\frac{1}{n}\Z/\Z}
\def\Ainfty{A^{\infty}}
\title{Arithmetic Chern-Simons Theory I }
\author{Minhyong Kim}
\thanks{M.K. Supported by  grant 	EP/M024830/1 from the EPSRC}
\subjclass{14G10, 11G40, 81T45
}
\address{M.K: Mathematical Institute, University of Oxford and Department of Mathematics, Ewha Womans University}
\address{B.N.: Department of Mathematics, Queen Mary University of London}
\begin{document}
\maketitle
\begin{flushright}
with Appendix B by Behrang Noohi
\end{flushright}
\begin{abstract}In this paper, we apply   ideas of Dijkgraaf and Witten \cite{witten, DW} on 2+1 dimensional topological quantum field theory to arithmetic curves, that is, the spectra of rings of integers in algebraic number fields. In the first three sections, we define  classical Chern-Simons functionals on spaces of Galois representations. In the highly speculative section 6, we consider the far-fetched possibility of using Chern-Simons theory to construct $L$-functions.
\end{abstract}

\begin{flushleft}

\section{The arithmetic Chern-Simons action: basic case}

We wish to move rather quickly to a concrete definition in this first section. The reader is directed to section 5 for a motivational discussion of $L$-functions.
\ms

Let $X=\Spec(\mathcal{O}_F)$, the spectrum of the ring of integers in a number field $F$. We  assume that $F$ is totally imaginary, for simplicity of exposition. Denote by $\Gm$ the \'etale sheaf that associates to a scheme the units in the global sections of its coordinate ring. We have the following canonical isomorphism (\cite{mazur}, p. 538):
$$\inv: H^3(X, \Gm)\simeq \Q/\Z. \ \ \ \ \ (*)$$
This map is deduced from the `invariant' map of local class field theory. We will use the same name for a  range of isomorphisms having the same essential nature, for example,
$$\inv:H^3(X, \Z_p(1))\simeq \Z_p,\ \ \ \ \ (**)$$
where $\Z_p(1)=\invlim_i \mu_{p^i}$, and  $\mu_n\subset \Gm$ is the sheaf of $n$-th roots of 1. This follows from  the exact sequence
$$0\rightarrow \mu_n\rightarrow \Gm
\stackrel{(\cdot)^n}{\rightarrow}\Gm\rightarrow \Gm/(\Gm)^n\rightarrow 0.$$
 That is, according to loc. cit., $$H^2(X,\Gm)=0,$$ while by op. cit., p. 551, we have 
$$H^i(X,\Gm/(\Gm)^n)=0$$ for $i\geq 1$. If we break up the above into two short exact sequences,
$$0\rightarrow \mu_n\rightarrow \Gm\stackrel{(\cdot)^n}{\rightarrow}\cK_n\rightarrow 0,$$
and
$$0\rightarrow \cK_n \rightarrow \Gm\rightarrow \Gm/(\Gm)^n\rightarrow 0,$$
we deduce $$H^2(X, \cK_n)=0,$$from  which it follows that
$$H^3(X, \mu_n)\simeq \nZ,$$
the $n$-torsion inside $\Q/\Z$. Taking the inverse limit over $n=p^i$ gives the second isomorphism above.  The pro-sheaf $\Z_p(1)$ is a very familiar coefficient system for \'etale cohomology and (**) is  reminiscent of the fundamental class of a compact oriented three manifold for singular cohomology. Such an analogy was noted by Mazur around 50 years ago \cite{mazur2} and has been developed rather systematically by a number of mathematicians, notably, Masanori Morishita \cite{morishita}. Within this circle of ideas is included the analogy between knots and primes, whereby the map
$$\Spec(\mathcal{O}_F/\mathfrak{P}_v)\rightarrowtail X$$
from the residue field of a prime $\mathfrak{P}_v$ should be similar to the inclusion of a knot.  Let $F_v$ be the completion of $F$ at the place $v$ and $\mathcal{O}_{F_v}$ its valuation ring. If one takes this analogy seriously (as did Morishita), the map $$\Spec(\mathcal{O}_{F_v})\rightarrow X,$$ should be similar to the inclusion of a handle-body around the knot, whereas $$\Spec(F_v)\rightarrow X$$ resembles the inclusion of its boundary torus\footnote{It is not clear to me that the topology of the boundary should really be a torus. This is reasonable if one thinks of the ambient space as a three-manifold. On the other hand, perhaps it's possible to have a notion of a knot in a {\em homology three-manifold} that has an exotic tubular neighbourhood? }. Given a finite set $S$ of primes, we can look at the scheme
$$X_S:=\Spec(\mathcal{O}_F[1/S])=X\setminus \{\mathfrak{P}_v\}_{v\in S}.$$
Since a link complement is homotopic to the complement of a tubular neighbourhood, the analogy is then forced on us between $X_S$ and a  three manifold with boundary given by a union of tori, one for each `knot' in $S$. These of course are basic morphisms in 2+1 dimensional topological quantum field theory \cite{atiyah}. From this perspective, perhaps the coefficient system $\Gm$ of the first isomorphism should have reminded us of the $S^1$-coefficient important in Chern-Simons theory \cite{witten, DW}. A more direct analogue of $\Gm$ is the sheaf $\mathcal{O}_M^\xx$ of invertible analytic functions on a complex variety $M$. However, for compact Kaehler manifolds, the comparison isomorphism $$H^1(M, S^1)\simeq H^1(M, \mathcal{O}_M^\xx)_0,$$where the subscript refers to the line bundles with trivial Chern class, is a consequence of Hodge theory. This indicates that in the \'etale setting with no natural constant sheaf of $S^1$'s, the familiar $\Gm$ has a topological nature, and can be regarded as a substitute\footnote{Recall, however, that it is of significance in Chern-Simons theory that one side of this isomorphism is purely topological while the other has an analytic structure.}. One problem, however, is that  the $\Gm$-coefficient computed directly gives divisible torsion cohomology, whence the need for considering coefficients like $\Z_p(1)$ in order to get functions of geometric objects having an analytic nature as arise, for example, in the theory of torsors for motivic fundamental groups \cite{CK, kim1, kim2, kim3, kim4}.
\ms

Let $$\pi=\pi_1(X, b),$$  the profinite \'etale fundamental group of $X$,
where we take $$b: \Spec(\bF)\rightarrow X$$ to be the geometric point coming from an algebraic closure of $F$. 
Assume now that the group $\mu_n(\bF)$ of $n$-th roots of 1 is in $F$. Fix an isomorphism $\zeta_n:\nZ\simeq \mu_n$. Then
$$\inv: H^3(X, \Z/n\Z)\simeq H^3(X,\mu_n)\simeq \frac{1}{n}\Z/\Z.$$
Now let $A$ be a finite group and fix a class $c\in H^3(A, \Z/n\Z)$.
Let $$\mathcal{M}(A):= \Hom_{cont}(\pi, A)/A$$
be the set of isomorphism classes of principal $A$-bundles over $X$. Here, the subscript refers to continous homomorphisms, on which $A$ is acting by conjugation. For $[\rho]\in \mathcal{M}(A)$, we get a class $$\rho^*(c)\in H^3(\pi, \Z/n\Z)$$ that depends only on the isomorphism class $[\rho]$. Denoting by $\inv$ also the composed map $$H^3(\pi, \Z/n\Z) \rightarrow H^3(X, \Z/n\Z)\simeq \frac{1}{n}\Z/\Z.$$
We get thereby a function
$$CS_c: \mathcal{M}(A)\ra \nZ;$$
$$ [\rho]\mapsto \inv(\rho^*(c)).$$
 This is the basic and easy case of the classical Chern-Simons functional in the arithmetic setting.
\ms

Examples might be constructed along the following lines. Let $A=\Z/n\Z$, $\a\in H^1(A,\Z/n\Z)$ the class of the identity, and $\b\in H^2(A, \Z/n\Z)$ the class of the extension
$$0\rightarrow \Zn \stackrel{n}{\rightarrow} \Z/n^2 \Z\rightarrow \Zn\rightarrow 0.$$
Then $\b=\d\a$, where $\d:H^1(A, \Zn)\rightarrow H^2(A,\Zn)$ is the boundary map arising from the extension.
From the cohomology theory of finite cyclic groups (\cite{NSW}, I.7), we know that
$$(\cdot)\cup \b: H^1(A,\Zn)\rightarrow H^3(A,\Zn)$$
is an isomorphism. Put $$c:=\a\cup \b=\a\cup \d \a\in H^3(A, \Zn).$$
Then
$$CS_c([\rho])=\inv[ \rho^*(\a) \cup \d \rho^*(\a)],$$
in close analogy to the formulas of abelian Chern-Simons theory.
\section{The arithmetic Chern-Simons action: boundaries}

Let $n$ be a natural number and $S$ a finite set of primes in $\mathcal{O}_F$. We assume in this section that all primes of $F$ dividing $n$ are in $S$.
Let $$\pi_S:=\pi_1(X_S, b)$$ and $$\pi_v=\Gal(\bF_v/F_v),$$ equipped with maps
$$i_v: \pi_v\rightarrow \pi_S$$
given by choices of embeddings $\bF\rightarrowtail \bF_v$.  The collection $$\{i_v\}_{v\in S}$$ will be denoted by $i_S$.
Let
$$Y_S(A):=\Hom_{cont}(\pi_S, A)$$ and denote by
$\mathcal{M}_S(A)$ the action groupoid whose objects are the elements of $Y_S(A)$ with morphisms given by the conjugation action of $A$. We also have the local version
$$Y_S^{loc}(A)=\prod_{v\in S} \Hom_{cont}(\pi_v, A)$$
as well as the action groupoid $\mathcal{M}^S_{loc}(A)$  with objects 
$Y_S^{loc}(A)$ and morphisms given by the action of $A^S:=\prod_{v\in S} A$
conjugating the separate components in the obvious sense.
 Thus, we  have the restriction functor $$r_S: \mathcal{M}_S(A)\rightarrow \mathcal{M}_S^{loc}(A),$$
 where a homomorphism $\rho: \pi_S\rightarrow A$ is restricted to the collection
 $$i_S^*\rho:=(\rho\circ i_v)_{v\in S}$$ and $A$ is embedded diagonally in $A^S$.
\ms

We will now employ a cocycle $c\in Z^3(A, \Z/n\Z)$ to associate a $\nZ$-torsor  to each point of $Y^S_{loc}(A)$ in an $A^S$-equivariant manner. This will be  a finite arithmetic version of the Chern-Simons line bundle \cite{FQ}   over $\mathcal{M}^S_{loc}$.
We use the notation
$$C^i_S:=\prod_{v\in S} C^i(\pi_v, \Z/n\Z)$$
for the continuous cochains,
$$Z^i_S:=\prod_{v\in S} Z^i(\pi_v, \Z/n\Z)\subset C^i_S$$
for the cocycles, and
$$B^i_S:=\prod_{v\in S} B^i(\pi_v, \Z/n\Z)\subset Z^i_S\subset C^i_S$$
for the coboundaries.
In particular, we have the coboundary map (see Appendix A for the sign  convention)
$$d:C^2_S\rightarrow Z^3_S.$$

Let $\rho_S:=(\rho_v)_{v\in S}\in Y_S^{loc}(A)$ and put
$$c\circ \rho_S:=(c\circ \rho_v)_{v\in S},$$ $$c\circ Ad_a:=(c\circ Ad_{a_v})_{v\in S}$$
 for $a=(a_v)_{v\in S}\in A^S$, where $Ad_{a_v}$ refers to the conjugation action.  To define the arithmetic Chern-Simons line associated to $\rho_S$, we need the intermediate object
$$H(\rho_S):=d^{-1}(c\circ \rho_S)/B^2_S\subset C^2_S/B^2_S.$$
This is a torsor for
$$H^2_S:=\prod_{v\in S} H^2(G_v, \Z/n\Z)\simeq \prod_{v\in S} \nZ.$$
(\cite{NSW},  Theorem (7.1.8).)
We then use the sum map
$$\Sigma: \prod_{v\in S} \nZ\rightarrow \nZ$$
to push this out to a $\nZ$-torsor. That is, define
$$L(\rho_S):=\Sigma_*[ H(\rho_S)  ].$$
The natural map $H(\rho_S)\rightarrow L(\rho_S)$ will also be denoted by the sum symbol $\Sigma$.
\ms

In fact, $L$ extends to a functor from $\mathcal{M}_S^{loc}(A)$ to the category of $\nZ$-torsors.
To carry out this extension,  we just need to extend $H$ to a functor to $H^2_S$-torsors.
 According to Appendices A and B, for $a=(a_v)_{v\in S}\in A^S$ and each $v$, there is an element
$h_{a_v}\in C^2(A, \Z/n)/B^2(A, \Z/n)$ such that
$$c\circ Ad_{a_v}=c+ dh_{a_v}.$$
Also, 
$$h_{a_vb_v}=h_{a_v}\circ \Ad_{b_v}+ h_{b_v}.$$
Hence, given $a: \rho_S\rightarrow \rho_S',$  so that $\rho_S'=\Ad_a \circ \rho_S$, we define
$$H(a): H(\rho_S)\rightarrow H(\rho_S')$$
to be the map induced by
$$x\mapsto x'=x+(h_{a_v}\circ \rho_v)_{v\in S}.$$
Then
$$dx'=dx+(d(h_{a_v}\circ \rho_v))_{v\in S}=(c\circ \rho_v)_{v\in S}+((dh_{a_v})\circ \rho_v)_{v\in S}=(c\circ \Ad_{a_v}\circ \rho_v)_{v\in S}.$$
So $$x'\in d^{-1}(c\circ \rho'_S)/B^2_S,$$ and by the formula above, it is clear that $H$ is a functor. That is, $ab$ will send $x$ to
$$x+h_{ab}\circ \rho_S,$$
while if we apply $b$ first, we get $$x+h_b\circ \rho_S\in H(\Ad_b\circ \rho_S),$$ which then goes via $a$ to
$$x+h_b\circ \rho_S +h_a\circ \Ad_b\circ \rho_S.$$
Thus, $$H(ab)=H(a)H(b).$$ Defining
$$L(a)=\Sigma_*\circ H(a)$$
turns $L$ into a functor from $\mathcal{M}_S^{loc}$ to $\nZ$-torsors. Even though we are not explicitly laying down geometric foundations, it is clear that $L$ defines thereby an $A^S$-equivariant $\nZ$-torsor on $Y_S^{loc}(A)$, or a $\nZ$-torsor on the stack $\msloc(A)$.
\ms

We can compose the functor $L$ with the restriction
$r_S: \mathcal{M}_S(A)\rightarrow \mathcal{M}_S^{loc}(A)$ to get an $A$-equivariant functor
$L^{glob}$ from $\mathcal{M}_S(A)$ to $\nZ$-torsors. 

\begin{lem}
Let $\rho \in Y_S(A)$ and $a\in \Aut(\rho).$ Then $L^{glob}(a)=0$.
\end{lem}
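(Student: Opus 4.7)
The plan is to unravel the definitions so that $L^{glob}(a)$ becomes a translation scalar in $\nZ$ attached to the tuple $(h_a\circ\rho_v)_{v\in S}$, and then to recognize this sum as the image, under $\sum_v\inv_v$, of the localizations of a single \emph{global} $2$-cocycle class in $H^2(\pi_S,\Z/n)$; global reciprocity then forces it to vanish.

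First I would make the statement concrete. Since $L^{glob}(a)=L(r_S(a))$ is an automorphism of the $\nZ$-torsor $L(r_S(\rho))$, it equals translation by a uniquely determined scalar in $\nZ$, and ``$L^{glob}(a)=0$'' means that scalar vanishes. By construction $L(a)=\Sigma_*H(a)$, and $H(a)$ on $H(r_S(\rho))$ is translation by $[(h_a\circ\rho_v)_{v\in S}]\in H^2_S$. Under the isomorphism $H^2_S\simeq\prod_{v\in S}\nZ$ given by the local invariants, pushing forward by $\Sigma$ turns this into
$$L^{glob}(a)=\sum_{v\in S}\inv_v\bigl([h_a\circ\rho_v]\bigr),$$
so the task reduces to proving $\sum_{v\in S}\inv_v([h_a\circ\rho_v])=0$.

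The key observation is global. Since $a\in\Aut(\rho)$ one has $\Ad_a\circ\rho=\rho$, hence
$$d(h_a\circ\rho)=(dh_a)\circ\rho=(c\circ\Ad_a-c)\circ\rho=c\circ\rho-c\circ\rho=0.$$
Thus $h_a\circ\rho\in Z^2(\pi_S,\Z/n)$ is a \emph{global} $2$-cocycle whose pullback along each $i_v$ is precisely $h_a\circ\rho_v$; equivalently, $[h_a\circ\rho_v]$ is the localization at $v$ of the single global class $[h_a\circ\rho]\in H^2(\pi_S,\Z/n)$.

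Global reciprocity then closes the argument. Identifying $\Z/n\simeq\mu_n$ via $\zeta_n$, for every finite $v\notin S$ the hypothesis that all primes dividing $n$ lie in $S$ gives $v\nmid n$, so the localization of $[h_a\circ\rho]$ at $v$ is unramified and lies in $H^2(\Gal(F_v^{nr}/F_v),\mu_n)=0$ (as $\hat{\Z}$ has cohomological dimension one); archimedean places contribute nothing because $F$ is totally imaginary. Hence
$$\sum_{v\in S}\inv_v([h_a\circ\rho_v])=\sum_{\text{all }v}\inv_v\bigl([h_a\circ\rho]|_v\bigr)=0$$
by the Hasse--Brauer--Noether reciprocity law for $\mu_n$, proving the lemma. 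The main obstacle is really just bookkeeping: one must verify, in the sign conventions of Appendix A, that the scalar representing $L^{glob}(a)$ is exactly $\sum_v\inv_v[h_a\circ\rho_v]$, and that $[h_a\circ\rho]$ is independent of the chosen lift of $h_a$ from $C^2(A,\Z/n)/B^2(A,\Z/n)$ --- any other lift differs by an element of $B^2(A,\Z/n)$, whose pullback lies in $B^2(\pi_S,\Z/n)$ and does not affect the cohomology class.
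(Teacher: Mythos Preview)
Your proof is correct and follows essentially the same route as the paper: observe that $a\in\Aut(\rho)$ forces $d(h_a\circ\rho)=0$, so $h_a\circ\rho$ defines a global class in $H^2(\pi_S,\Z/n)$, and then invoke reciprocity to kill the sum of local invariants. The only cosmetic difference is that the paper cites the reciprocity law directly for $H^2(\pi_S,\Z/n)$ (NSW, Theorem (8.1.17)), whereas you phrase it via Hasse--Brauer--Noether over all places and then argue separately that the unramified places outside $S$ contribute nothing; these are equivalent formulations.
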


\begin{proof}
By assumption, $\Ad_a\rho=\rho$, and hence, $dh_a\circ \rho=0$. That is,
$h_a\circ \rho\in H^2(\pi_S, \nZ).$  Hence, by the reciprocity law for $H^2(\pi_S, \nZ)$  (\cite{NSW}, Theorem (8.1.17)), we get
$$\Sigma_*(h_a\circ \rho )=0.$$
\end{proof}
By the argument of  \cite{FQ}, page 439, we see that there is a $\nZ$-torsor
$$L^{\inv}([\rho])$$ of invariant sections for the functor $L^{glob}$ depending only on the orbit $[\rho]$. This is the set of families of elements $$x_{\rho'}\in L^{glob}(\rho')$$ as $\rho'$ runs over $ [\rho]$ with the property  that every morphism $a: \rho_1\rightarrow \rho_2$ takes $x_{\rho_1}$ to $x_{\rho_2}$.  Alternatively, $L^{\inv}([\rho])$ is the inverse limit of the $L^{glob}(\rho')$ with respect to the indexing category $[\rho]$. 

\ms

Since $$H^3(X_S, \nZ)=0,$$(\cite{NSW}, Proposition (8.3.18)) the cocycle
$c\circ \rho$ is a coboundary $$c\circ \rho=d\b$$ for
$\b\in C^2(\pi_S, \nZ)$. This element defines a class $$CS_c([\rho]):=\Sigma([i^*_S(\b)])\in L^{\inv}([\rho]).$$
A different  choice $\b'$ will be related by 
$$\b'=\b+z$$
for a 2-cocycle $z\in Z^2(\pi_S, \nZ)$, which vanishes when mapped to $L((\rho\circ i_v)_{v\in S})$. Thus, the class $CS_c([\rho])$ is independent of the choice of $\b$ and defines a global section
$$CS_c\in \G( \mathcal{M}_S(A), L^{glob}).$$
Within the context of this paper, a `global section' should just be interpreted as an assignment of $CS_c([\rho])$ as above for each orbit $[\rho]$.

\section{The arithmetic Chern-Simons action: the $p$-adic case} \label{sec:p-adic case}

Now fix a prime $p$ and assume all primes of $F$ dividing $p$ are contained in $S$.   Fix a compatible system $(\zeta_{p^n})_n$ of $p$-power roots of unity, giving us an isomorphism 
$$
\z: \Z_p\simeq \Z_p(1):=\invlim_n \mu_{p^n}.
$$
In this section, we will be somewhat more careful with this isomorphism. Also, it will be necessary to make some assumptions on the representations that are allowed. 
\ms

Let $A$ be a $p$-adic Lie group, e.g., $GL_n(\Z_p)$. Assume $A$ is equipped with an open  homomorphism
$t:A \rightarrow \G:=\Z_p^\xx$ and define $A^n$ to be the kernel of the composite map 
$$
A\rightarrow \Z_p^\xx\rightarrow (\Z/p^n\Z)^\xx=:\G_n.
$$
Let 
$$
\Ainfty=\cap_n A^n=\Ker(t).
$$ 

In this section, we denote by $Y_S(A)$ the continuous homomorphisms $$\rho: \pi_S\rightarrow A$$ such that $t\circ \rho$ is a  power $\chi^s$ of the $p$-adic cyclotomic character of $\pi_S$ by a $p$-adic unit $s$. (We note that $s$ itself is allowed to vary.) Of course this condition will be satisfied by any geometric Galois representation or natural $p$-adic families containing one.

\ms
 
    As before, $A$ acts on $Y_S(A)$ by conjugation. But in this section, we will restrict the action to
$\Ainfty$ and   use the notation $\mathcal{M}_S(A)$ for the corresponding action groupoid.

Similarly, we denote by $Y_S^{loc}$ the collections of continuous homomorphisms
$\rho_S=(\rho_v: \pi_v\ra A)_{v\in S}$
for which there exists a $p$-adic unit $s$ such that $t\circ \rho_v=(\chi|\pi_v)^s$ for all $v$. $\mathcal{M}^{loc}_S(A)$ then denotes the action groupoid defined by the product   $(\Ainfty)^S$ of the conjugation action on  the $\rho_S$.
\ms

We now fix a continuous cohomology class $$c\in  H^3(A, \Z_p[[\G]]),$$where 
$$\Z_p[[\G]]=\invlim_n\Z_p[\G_n].$$ We represent $c$  by a cocycle in $Z^3(A, \Z_p[[\G]])$, which we will also  denote by $c$.
Given $\rho\in Y_S(A)$, we can view $\Z_p[[\G]]$ as a continuous representation of $\pi_S$, where the action is left multiplication via $t\circ \rho$. We denote this representation by
$\Z_p[[\G]]_{\rho}$. The isomorphism
 $\z:\Z_p\simeq \Z_p(1)$, even though it's not $\pi_S$-equivariant, does induce a $\pi_S$-equivariant isomorphism
$$
\z_{\rho}: \Z_p[[\G]]_{\rho}\simeq \L:=\Z_p[[\G]]\otimes \Z_p(1).
$$
Here, $\Z_p[[\G]]$ written without the subscript refers to the action via the cyclotomic character of $\pi_S$ (with $s=1$ in the earlier notation). The isomorphism is defined as follows.
If $t\circ \rho=\chi^s$, then we have the isomorphism
$$\Z_p[[\G]]\simeq \Z_p[[\G]]_{\rho}$$
that sends $\g$ to $\g^s$. On the other hand, we also have
$$\Z_p[[\G]]\simeq \L$$
that sends $\g$ to $\g\otimes \g\z(1).$ Thus, $\z_{\rho}$ can be taken as the inverse of the first followed by the second.

  \ms
Combining these considerations,  we get an element
$$
\z_{\rho}\circ \rho^*c=\zeta_{\rho}\circ c\circ \rho \in Z^3(\pi_S, \L). 
$$
Similarly, if $\rho_S:=(\rho_v)_{v\in S}\in Y^{loc}_S$, we can regard $\Z_p[[\G]]_{\rho_v}$ as a representation of $\pi_v$ for each $v$, and  we get
$\pi_v$ equivariant isomorphisms
$$
\z_{\rho_v}:\Z_p[[\G]]_{\rho_v}\simeq \L.
$$
We also use the notation $$\z_{\rho_S}:\prod_{v\in S}\Z_p[[\G]]_{\rho_v}\simeq \prod_{v\in S}\L$$
for the isomorphism given by the product of the $\z_{\rho_v}$.

\ms

It will be convenient to again denote by  $C^i_S(\L)$ the product $\prod_{v\in S} C^i(\pi_v, \L)$ and use the similar notations $Z^i_S(\L)$, $B^i_S(\L)$ and $H^i_S(\L)$.
The element $\z_{\rho_S}\circ \rho_S^*c$ is  an element in $ Z^3_S(\L)$.
We then put
$$
H(\rho_S, \L):=d^{-1}((\z_{\rho_S}\circ \rho_S^*c) )/B^2_S(\L)\subset C^2_S(\L)/B^2_S(\L).
$$
This is a torsor for $$H^2_S(\L)\simeq \prod_{v\in S} H^2(\pi_v, \L).$$
The augmentation map $$a:\L\rightarrow \Z_p(1)$$ for each $v$ can be used to push this out to a torsor
$$a_*(H(\rho_S, \L))$$
for the group 
$$\prod_{v\in S} H^2(\pi_v, \Z_p(1))\simeq \prod_{v\in S}\Z_p,$$  which then can be pushed out with the sum map $$\Sigma :\prod_{v\in S}\Z_p\rightarrow \Z_p$$ to give us a $\Z_p$-torsor $$L(\rho_S, \Z_p):=\Sigma_*(a_*(H(\rho_S, \L))).$$

As before, we can turn this into a functor
$L(\cdot, \Z_p)$ on $\mathcal{M}^{loc}_S(A)$, taking into account the action of $(\Ainfty)^S$. By composing with the restriction functor
$$
r_S:\mathcal{M}_S(A)\ra \mathcal{M}^{loc}_S(A),
$$
we also get a $\Z_p$-torsor $L^{glob}(\cdot, \Z_p)$
on $\mathcal{M}_S(A)$.
\ms

We now  choose an element $\b\in C^2(\pi_S, \L)$ such that
$$
d\b=\z_{\rho}\circ c\circ \rho\in Z^3(\pi_S, \L)=B^3(\pi_S, \L)
$$ 
to define the $p$-adic Chern-Simons action
$$
CS_c([\rho]):=\Sigma_*a_*i_S^*(\b)\in L^{glob}([\rho],\Z_p).
$$
The argument that this action is independent of $\b$ and equivariant is also the same as before, giving us an element 
$$
CS_c\in \G(  \mathcal{M}_S(A) ,L^{glob}(\cdot, \Z_p)).
$$

\section{Remarks}
1. The restrictions (1) and (2) on the  representations $\rho$ that make up $Y_S(A)$ in section 3   might seem rather stringent. However, if we take $A$ to be the image of some fixed $p$-adic geometric Galois representation $\rho_0$, this includes all twists $\rho_0(s)$ of $\rho_0$ by unit powers $\chi^s$ of the $p$-adic cyclotomic character. Thus, we are in effect constructing with the cocycle $c$ a  section of a line bundle on the  entire $p$-adic weight space $\Z_p^\xx$. In the next section, we will discuss the motivation coming from the theory of $L$-functions. The ability to construct such a section is already promising from this point of view.
\ms

2. We have dealt with the $p$-adic theory assuming $S$ is non-empty. It is straightforward to get a $p$-adic function  on the moduli space for $X$, the case `without boundary'. But according to the Fontaine-Mazur conjecture, an infinite $p$-adic Lie group should not be possible as the image of a representation of $\pi_1(X,b)$. Indeed, since $CS_c(\rho)$ is a $p$-adic invariant of such a represention, plausible applications to questions of  existence and distribution could be considered.
\ms

3. In the $p$-adic theory, no changes are necessary for $F$ with a real embedding provided we take $p\neq 2$. Indeed, even though the duality theorems involving the sheaf $\Gm$ become somewhat more complicated because of the contribution from real places, such contributions all vanish for $p$-adic coefficient sheaves if $p$ is odd. However, if one were to imagine a Chern-Simons theory for complex $L$-functions, the Archimedean places should be expected to play an essential role.
\ms

4. In the first two sections, we assumed the field $F$ contained the $n$-th roots of 1 so as to trivialize the sheaf $\mu_n$. This allowed us to construct functions out of constant cohomology classes for $A$. Similarly, in section 3, we obtained $\Z_p(1)$  cohomology classes from $\Z_p$-classes by a twisting trick familiar in Iwasawa theory. To avoid this, one could have regarded the group $A$ as a constant sheaf and used cohomology classes in $H^3(BA, \mu_n)$ from the beginning. But it is hard to imagine constructing such classes other than by twisting classes with constant coefficients. This is essentially equivalent to our approach. 
\ms

5. We are not giving at present any examples. For finite groups $A$, it is not hard to get classes in $H^3$, for example, starting from cyclic subgroups. On the other hand, a norm compatible sequence of classes for infinite $p$-adic Lie groups seems to be harder to construct. In  subsequent work, we will study this question systematically from the viewpoint of Lazard's theory of analytic groups and duality for groups like $GL_n(\Z_p)$ \cite{huber}.
\ms

6. It is unfortunate that the $p$-adic case does not include $A=\Z_p$ for reasons of cohomological dimension. Even in topological Chern-Simons theory, the abelian case seems to have a nature different from groups like $SU(2)$. One way of getting around this difficuly for  $A\simeq \Z_p^r$ might be to use classes in $H^1(A,  \Z_p)$ pulled back to $\pi_S$, from which one could take Massey products to end up with 3-cocycles. Another possibility, following a pattern familiar in Iwasawa theory, would be to find a sequence of $\Z/p^n\Z$ classes that are congruent in a somewhat  subtle sense, to which one  applies the construction at the end of section 1.
\ms

7. One notable difference from the usual Chern-Simons theory is that the  Chern-Simons line of this paper is presented as an additive torsor, rather than a multiplicative one.  However, note that we are using  an isomorphism $\nZ\simeq \mu_n$, and the latter is multiplicative. Thus, our finite torsors can also be thought of as multiplicative $\mu_n$-torsors, in closer parallel to the topological setting.

However, the $p$-adic Chern-Simons line does seem to be genuinely additive. As will be explained in the next section, the values of $p$-adic $L$-functions should also lie in the fibers of a line bundle. Thus, if there is a connection between the two, the arithmetic Chern-Simons invariant should be related to the {\em logarithm} of the $p$-adic $L$-function.
\ms

8. In this paper, we are defining only the classical Chern-Simons functional. Speculating wildly, one might hope that twists of the value of a  classical functional by a family of cyclotomic characters represent a kind of semi-classical approximation. In any case, it would be interesting to construct a quantum wavefunction in the arithmetic setting. For the finite-coefficient case of sections 1 and 2, this is in principle easy to define. The (more important) $p$-adic coefficients present a greater challenge.
\ms

9. Since the $\Spec(F_v)$ are playing the role of boundary tori,  moduli spaces of local Galois representations should make up the classical phase spaces of arithmetic Chern-Simons theory. In the topological case, the corresponding moduli space has an interpretation using either holomorphic vector bundles or Higgs bundles, depending on the group. In this regard, it is interesting to take note of recent developments in $p$-adic Hodge theory defining a functor from Galois representations to vector bundles on a $p$-adic curve \cite{FF}. The moduli space of vector bundles that arises admits a uniformization by an infnite-dimensional Grassmannian in essentially the same manner as for complex Riemann surfaces. The possibility of using this construction to study determinant line bundles following the pattern of conformal field theory appears to be an interesting avenue of investigation in the study of local moduli spaces.
\ms

10. It is somewhat unforunate in this regard that work of  Kapustin and Witten \cite{KW} on the geometric Langlands programme doesn't make use of Chern-Simons theory, but rather,  $S$-duality for 4D gauge theory. Since the Langlands programme is another source of $L$-functions in arithmetic, a pleasant coincidence might have been for  topological Chern-Simons theory to play a critical role also in the geometric Langlands programme. In any case, the analogy between  Chern-Simons functions and $L$-functions suggests a possibility for defining $L$-functions in geometric Langlands, usually thought not to admit such a formalism. That is, the $L$-function on the geometric Galois side should have the structure of a wavefunction over a character variety. The role of automorphic forms in geometric Langlands is played by $D$-modules on moduli spaces of principal bundles that are Hecke eigensheaves in a suitable sense. The theory of automorphic $L$-functions should then assign an amplitude to such a $D$-module, possibly using a path integral over objects on a three manifold that have the given $D$-module as a boundary value.

\section{Towards computation}
In this section, we indicate how one might go about computing the Chern-Simons invariant in the unramified case with finite coefficients. That is, we assume we are in the setting of section 1.
\ms

Let $X=\Spec(\O_F)$ and $M$ a continuous representation of $\pi=\pi_1(X)$ regarded as a locally constant sheaf on $X$. Assume $M=\invlim M_n$ with  $M_n$ finite representations such that there is a finite set $T$  of primes in $\O_F$ containing all primes dividing the order of any $|M_n|$. Let $U=\Spec(\O_{F,T})$, $G_T=\pi_1(U)$, and $G_v=\Gal(\bF_v/F_v)$ for a place $v$ of $F$. Write $m_v$ for the maximal ideal of $\O_F$ correponding to the place $v$ and $r_v$ for the restriction map of cochains or cohomology classes from $G_T$ to $G_v$.
\ms

Denote by
$C^*_c(G_T, M)$ the complex defined as a mapping fiber
$$C^*_c(G_T, M):=\mbox{Fiber}[C^*(G_T, M)\ra \prod_{v\in T} C^*(G_v, M)].$$
So
$$C^n_c(G_T, M)=C^n(G_T,M)\times\prod_{v\in T} C^{n-1}(G_v, M),$$
and
$$d(a, (b_v))=(da, (r_v(a)-db_v))$$
for $(a,(b_v))\in C^n_c(G_T, M)$.
As in \cite{FK} page 20, since there are no real places in $F$, there is a quasi-isomorphism
$$C^*_c(G_T, M)\simeq R\G(U, j_!(M)),$$
where  $j:U\ra X$ is the inclusion.
But  there is also an exact sequence
$$0\ra j_!j^*(M)\ra M\ra i_*i^*(M)\ra 0,$$
where $i:T\ra X$ is the closed immersion complementary to $j$. Thus, we get an exact sequence
$$\prod_{v\in T}H^2(\Spec(\O_F/m_v), i^*(M))\ra H^3(C_c(G_T, M))\ra H^3(X,M)\ra \prod_{v\in T}H^3(\Spec(\O_F/m_v),$$
from which we get an isomorphism
$$H^3(C_c(G_T, M))\simeq H^3(X,M),$$
since $\Spec(\O_F/m_v)$ has cohomological dimension 1.

We interpret this as a statement that the cohomology of $X$
$$H^3(X, M)$$
can be identified with  cohomology of a `compactification' of $U$ with respect to the `boundary',  that is, the union of the $\Spec(F_v)$ for $v\in T$. 
This means that a class $z\in H^3(X, M)$ is represented by
$(c, (b_v)_{v\in T})$, where $c\in Z^3(G_T, M)$ and
$b_v\in C^2(G_v, M)$ in such a way that 
$$db_v=c|G_v.$$
There is also the exact sequence
$$\rightarrow H^2(G_T, M)\rightarrow \prod_{v\in T} H^2(G_v, M)\rightarrow H^3_c(U, M)\rightarrow 0,$$
the last zero being $H^3(U, M)=0$.
We can use this to compute the invariant of $z$ when $M=\mu_n$.
We have to lift $z$ to a collection of classes $x_v\in H^2(G_v, \mu_n)$ and then take the sum
$$\inv(z)=\sum_v\inv(x_v).$$
This is independent of the choice of the $x_v$ by the reciprocity law. The lifting process may be described as follows. The map $$\prod_{v\in T} H^2(G_v, \mu_n)\rightarrow H^3_c(U, \mu_n)$$
just takes a tuple of 2-cocycles $(x_v)_{v\in T}$ to $(0, (x_v)_{v\in T})$. But by the vanishing of $H^3(U, \mu_n)$, given
$z=(c, (b_v))$, we can find a global cochain $a\in C^2(G_T, \mu_n)$ such that $da=c$. We then put
$x_v:= b_v-r_v(a).$

\ms

When we start with a class $z\in H^3(\pi, \mu_n)$ let
$c\in Z^3(\pi, \mu_n)$ represent $z$. Let $I_v\in G_v$ be the inertia subgroup. We now can trivialise $c|G_v$ by first trivialising it over $G_v/I_v$ to which it factors. That is,
the $b_v$ as above can be chosen as cochains factoring through $G_v/I_v$.
This is possible because $H^3(G_v/I_v, \mu_n)=0$. The class $(c, (b_v))$ chosen this  way is independent of the choice of the $b_v$. This is because $H^2(G_v/I_v, \mu_n)$ is also zero.
The point is that the representation of $z$ as $(c, (b_v))$ with unramified $b_v$ is essentially canonical. More precisely, given $c|(G_v/I_v)\in Z^3(G_v/I_v, \mu_n)$, there is a canonical $$b_v\in C^2(G_v/I_v, \mu_n)/B^2(G_v/I_v, \mu_n)$$
such that $db_v=c|(G_v/I_v)$. This can then be lifted to a canonical class
in $ C^2(G_v, \mu_n)/B^2(G_v, \mu_n)$.
 Now we trivialise $c|G_T$ globally as above,
that is, by the choice of $a\in C^2(G_T, \mu_n)$ such that
$da=c|G_T$. Then
$((b_v-r_v(a))_{v\in T}$ will be cocycles, and we compute
$$\inv(z)=\sum_v \inv (b_v-r_v(a)).$$
\ms

A few remarks about this method:
\ms

1. Underlying this is the fact that the the compact support cohomology $H^3(U, \mu_n)$ can be computed relative to the somewhat fictitious boundary of $U$ or as relative cohomology $H^3(X, T; \mu_n).$ Choosing the unramified local trivialisations corresponds to this latter representation.
\ms

2. To summarise the main idea again, starting from a cocycle $c\in Z^3(\pi, \mu_n)$ we have canonical unramified trivalisations at each $v$ and a non-canonical  global ramified trivialisation. \bq {\em The invariant of $z$ measures the discrepancy between the unramified local trivialisations and a ramified global trivialisation. }\eq
The fact that the non-canonicality of the global trivialisation is unimportant follows from the reciprocity law. 
\ms

3. The description above that computes the invariant by comparing the local unramified trivialisation with the global ramified one is a precise analogue of the so-called `glueing formula' for Chern-Simons invariants when applied to $\rho^*(c)$ for a representation $\rho: \pi\ra \nZ$ and a 3-cocycle $c$ on $\nZ$. A systematic treatment with explicit examples will be presented in the forthcoming work \cite{CKPY}.
\ms

For the moment, we content ourselves with some ideas for   the case of
$\Hom(\pi, \Z/p)$.
\ms

Recall from section 1 that a 3-cocycle on $\Z/p$ can be obtained as $\d \a\cup \a$, where $\a\in H^1(\Z/p, \Z/p)$ is the identity map
and $\d$ is the boundary map coming from the extension
$$E: 0\rightarrow \Z/p\rightarrow \Z/p^2\rightarrow \Z/p\rightarrow 0.$$
If we have a homomorphism
$$f: N\rightarrow \Z/p,$$
a trivialisation of $f^*(\d \a\cup  \a)$ may be obtained by trivialising $\d\a$. That is, if $db=f^*(\d \a)$, for a cochain
$b $ on $N$, then
$$d(-\a \cup b)=\a\cup \d \a.$$
Another way of putting this is that a splitting of the sequence
$f^*(E)$ will give a trivialisation. That is, if there is a lifting
$\tf: N\rightarrow \Z/p^2$ of $f$, then we can construct a trivialisation. An explicit description goes like this. Choose a set-theoretic splitting
$s: \Z/p\rightarrow \Z/p^2,$ for example, in the standard way that sends the class of $i \mod p$ to that of $i\mod p^2$.
Then 
$\d \a= ds.$  Suppose
$\tf$ exists as above. Then the trivialisation of $f^*\d \a$ is given by 
$$b:=s\circ f -\tf,$$
so that $-\a \cup (s\circ f-\tf)$ is a trivialisation of
$\a \cup \d \a $. Now, if $N=G_v/I_v\simeq \hZ$, it suffices to choose $\tf$ in any manner. So the key point is the lifting 
$\tf$ in the case where $N=G_T$ and $f:G_T\rightarrow \Z/p$ is the composition of a representation $\rho:\pi\rightarrow \Z/p$ with the quotient map $k: G_T\rightarrow \pi$.
 To construct examples, here is a simple starting point. Take $F$ an  totally imaginary field such that the class group $C_F\simeq \Z/p$.  I believe there are many examples where the Hilbert class field of $F$ has been constructed as a Kummer extension, even though we need to look through the literature on explicit class field theory (say with $F=\Q(\mu_{p^2}))$. Let $H=F(h^{1/p})$ and let $\rho: \pi\rightarrow \Z/p$ be the corresponding Kummer character. With these assumptions, of course there can't be a lift $\tilde{\rho}:\pi\rightarrow \Z/p^2$. However, by taking $T$ to be the ramified places of the character correponding to $ h^{1/p^2}$, $f:=\rho\circ k$ does  lift to $\tf:G_T\rightarrow \Z/p^2$. This then gives the trivialisation of $f^*(\d \a)$ as above.

\section{Motivation: $L$-functions}
In the following, the ring $R$ can be provisionally thought of as either $\C$, $\Z_p$, or $\Q_p$ for some primes $p$. However, one can, and needs to, allow more general coefficients, such as an extension field of $\Q_p$, or the profinite group rings of  Iwasawa theory (\cite{FK}, 1.4.1). It is conceivable that more general rings are appropriate for the complex theory as well. However, for concreteness, it is all right to  keep in mind these simple cases.
\ms

The theory of $L$-functions, still largely conjectural, assigns a canonical {\em $L$-amplitude}
$$L(X,\mathcal{F})$$
to a pair 
 consisting of a scheme $X$ of finite type over $\Z$ and a constructible sheaf $\mathcal{F}$ of finitely-generated $R$-modules in the \'etale topology of $X$. It is convenient to allow also elements of bounded derived categories of such $\mathcal{F}$ as coefficients. This amplitude is sometimes a number in $R$, but  is expected in general to be an element of a determinant line. The proposal that an amplitude of the right sort can always be defined is known as the Hasse-Weil conjecture for complex $L$-functions and Iwasawa's main conjecture for $p$-adic $L$-functions. The main difficulty can be thought of as a problem of regularizing an infinite product.  Since this point of view may not be entirely familiar to physicists, we give a brief overview of the theory described in \cite{kato} and \cite{FK}.
 
 \ms
 
 Associated to $(X,\mathcal{F})$, there are the cohomology groups with compact support
 $$H^i_c(X, \mathcal{F}),$$
 which are finitely generated $R$-modules. We denote by $D(X, \mathcal{F})$ the dual of the determinant of cohomology
 $$D(X, \mathcal{F}):=\otimes_i \det H^i(X, \mathcal{F})^{(-1)^{i+1}},$$
 a projective $R$-module of rank 1 \cite{KM}. 
 Hence, if $\mathcal{M}$ is a moduli space of sheaves on $X$,  the $D(X, \mathcal{F})$ will vary over points $[\mathcal{F}]\in \mathcal{M}$ and come together to form a line bundle\footnote{For this motivational discussion, the precise conditions necessary for the geometric statement to hold will be left unstated.}
 $$\mathcal{D}\rightarrow \mathcal{M}.$$
 Note here that $\mathcal{M}$ will be like the representation varieties in complex geometry, and hence, have the structure of  a scheme, formal scheme, or an analytic space over $\Spec(R)$.
 \ms
 
 The $L$-amplitude is conjectured to be a generator
 $$L(X, \mathcal{F})\in D(X,\mathcal{F}),$$
 which should patch together to a trivialisation of $\mathcal{D}$ over $\mathcal{M}$. Thus, the theory of $L$-functions proposes the existence of a canonical section
 $$L(X, \cdot)\in \G(\mathcal{M}, \mathcal{D})$$
 for suitable moduli spaces $\mathcal{M}$ of sheaves. The techniques of arithmetic geometry have so far provided essentially ad hoc methods for constructing such sections in limited settings. Thus, the availability of  solutions to entirely analogous problems in quantum field theory is the main motivation for an attempt to develop a parallel arithmetic theory.
 \ms

 A sheaf $\mathcal{F}$ is {\em acyclic} if $H^i_c(X, \mathcal{F})=0$ for all $i$. For an acyclic sheaf $\mathcal{F}$, there is a canonical trivialisation
 $$D(X, \mathcal{F})\simeq R$$
 corresponding to the fact that the determinant of the zero module is $R$. For acyclic sheaves, the $L$-amplitude can be regarded as an element of $R$. Furthermore,
 over the locus $\mathcal{M}_{acyc}\subset \mathcal{M}$ of acylic sheaves, we expect the determinant line bundle to have a canonical trivialization
 $$\mathcal{D}|\mathcal{M}_{acyc}\simeq \mathcal{O}_{\mathcal{M}_{acyc}}.$$
 Thus, over $\mathcal{M}_{acyc}$, the $L$-amplitude can be regarded as a function.
 \ms
 
 For coefficient rings like $R=\Z_p$, even when $\mathcal{F}$ is not acyclic, $\mathcal{F}\otimes \Q_p$ may be acyclic. So even when an element in $D(X, \mathcal{F})$ may not  canonically be an element of $R$, it may sometimes be regarded as an element of $R\otimes \Q_p$.
 A related phenomenon is the following. Suppose $$\mathcal{M}=\Spec(T)$$ and the locus of non-acyclic sheaves form a divisor with equation $f=0$. Then
 $\mathcal{D}$ can be regarded as a $T$-module. And $$\mathcal{D}[1/f]=\mathcal{D}\otimes T[1/f]$$ is canonically trivial. Let $s$ be the section of $\mathcal{D}[1/f]$ corresponding to 1 under this trivialization. Then, in favorable circumstances, for example, if $\mathcal{M}$ is regular, the section $$(1/f)s$$ extends over all of $\mathcal{M}$ and can be regarded as a trivializing section of $\mathcal{D}$. This is the way in which characteristic elements that occur in classical formulations of the Iwasawa main conjecture become interpreted as trivializing sections of determinant lines (cf. \cite{FK}, Example 2.5).
 \ms

The $L$-amplitude is conjectured to satisfy some natural conditions (\cite{kato}, conjecture 3.2.2, modified by \cite{FK}, conjecture 2.3.2):
\ms

 (1) Multiplicativity:
 If
 $$0\rightarrow \mathcal{F}_1\rightarrow \mathcal{F}_2\rightarrow \mathcal{F}_3\rightarrow 0$$
is a exact sequence, then the canonical isomorphism
 $$D(X, \mathcal{F}_2)\simeq D(X, \mathcal{F}_2)\otimes D(X, \mathcal{F}_2)$$
 takes
 $L(X. \mathcal{F}_2)$ to $L(X. \mathcal{F}_1)\otimes L(X. \mathcal{F}_3)$.
 \ms

 (2) Compatibility change of coefficient rings: If $R'$ is an $R$-algebra and $\mathcal{F}'=\mathcal{F}\otimes^L R'$, then the natural isomorphism 
 $$D(X, \mathcal{F})\otimes_R R' \simeq D(X, \mathcal{F}')$$
 takes $L(X, \mathcal{F})\otimes 1$ to $L(X, \mathcal{F}')$. (The base-change considered in \cite{FK} is more general to accommodate the possibility of non-commutative coefficient rings.)
 \ms
 
 (3) Two normalisation conditions: an easy one for sheaves over a finite field, and a very hard one having to do with conjectures on $L$-amplitude of motives.
 \ms
 
 We comment on (1) and (3).  The most important case of (1) is
 $$0\rightarrow j_!(j^{-1}\mathcal{F})\rightarrow \mathcal{F} \rightarrow i_*(i^{-1}(\mathcal{F}))\rightarrow 0,$$
 where $i: Z\rightarrowtail X$ is a closed embedding and $j:U\rightarrowtail X$ is the complement.
 Then the required multiplicativity is
 $$L(X, \mathcal{F})=L(U, \mathcal{F})\otimes L(Z, \mathcal{F}),$$
 where we omit the inverse images for notational convenience. Note that when all three are acyclic, the tensor product becomes a product of numbers and this is a literal equality.
 \ms
 
 The easy normalisation condition in (3)  is when $X=\Spec (\F_q)$, the spectrum of a finite field with $q=p^d$ elements. In that case, the stalk $\mathcal{F}_x$ at a geometric point
 $$x:\Spec(\bar{\mathbb{F}}_q)\rightarrow \Spec(\F_q)$$
 carries an action of the geometric Frobenius $$Fr_x: \Spec(\bar{\mathbb{F}}_q)\rightarrow \Spec(\bar{\mathbb{F}}_q)$$
 (the dual to the map $a\mapsto a^{q^{-1}}$). Thus, we get an exact sequence
 $$0\rightarrow H^0(\mathcal{F})\rightarrow \mathcal{F}_x\stackrel{I-Fr_x}{\longrightarrow} \mathcal{F}_x\rightarrow H^1(\mathcal{F})\rightarrow 0,$$
 inducing an isomorphism
 $$D(\Spec(\F_q), \mathcal{F})\simeq \det (\mathcal{F}_x)^*\otimes \det(\mathcal{F}_x) \simeq R.$$
 Then $L(\Spec(\F_q), \mathcal{F})$ is defined to be the inverse image of 1. When $\mathcal{F}_x$ is $R$-free and $\mathcal{F}$ is acyclic,
 this gives the normalization
 $$L(\Spec(\F_q), \mathcal{F})=\frac{1}{\det( [I-Fr_x]|\mathcal{F}_x)}.$$
When $X=\Spec(\F_q)$, the category of sheaves of $R$-modules is equivalent to the category of continuous representations of $\Gal(\bar{\mathbb{F}}_q/\F_q)$ on $R$-modules. This Galois group is topologically generated by $Fr_x$. There are  a number of ways of setting up the formalism of sheaves so that arbitrary representations of the Weil group $W_{\F_q}\subset \Gal(\bar{\mathbb{F}}_q/\F_q)$, that is, the group of integer powers of $Fr_x$, define sheaves on schemes over $\F_q$. Since $W_{\F_q}\simeq \Z$,  the one-dimensional complex characters of the Weil group of $\Spec(\F_q)$ are parametized by $\C^\xx$. So they can all be written as
 $$Fr_x\mapsto q^{-s},$$
 for some $s\in \C$. (The reason we parametrize the characters this way is because it is the description that's compatible with the norm character on the global idele class group.)  We denote the 1-dim representation corresponding to this character $\C(s)$. When $\mathcal{F}$ is a sheaf of $\C$-vector spaces, we denote by $\mathcal{F}(s)$ the sheaf corresponding to the representation
 $\mathcal{F}_x\otimes \C(s)$. 
  If $\mathcal{F}(s)$ is acyclic, we get
 $$L(\Spec(\F_q), \mathcal{F}(s))=\frac{1}{\det( [I-p^{-s}Fr_x]|\mathcal{F}_x)}.$$
 This is the way in which the analytic $L$-factors that arise in the complex theory of $L$-functions come up naturally as we vary a representation in a canonical one-parameter family.
 \ms
 
 For general $X$, let $S$ be a finite subset of $X_0$, the set of closed points of $X$, and $U_S=X\setminus S$.
 Then the multiplicative property of the $L$-amplitude gives
$$L(X, \mathcal{F})=L(U_S, \mathcal{F})\prod_{y\in S} L(\Spec(k(y)), \mathcal{F}_y),$$
where $k(y)$ is the (finite) residue field at $y$.
If the limit as $S$ grows large exists, we should have
$$L(X, \mathcal{F})=L(\mbox{generic}, \mathcal{F})\prod_{y\in X_0} L(\Spec (k(y)), \mathcal{F}_y),$$
where the factor $L(\mbox{generic}, \mathcal{F})$ can sometimes be determined. In substantial generality, it can be shown that the limit exists when we replace $\mathcal{F}$ by $\mathcal{F}(s)$ for $Re(s)$ sufficiently large, forcing on us essentially the familiar definition of an $L$-amplitude as an infinite product. There is also a formalism for making sense of this for coefficient rings more general than $\C$ (subject to hard conjectures and theorems about Weil sheaves associated to $l$-adic sheaves). The usual Hasse-Weil conjecture asserts that when $\mathcal{F}$ is motivic, one can define $L(X, \mathcal{F}(s))$  in a way that's meromorphic in $s$, with poles contributed only by trivial sheaves.
\ms

The hard (and important) normalisation condition would require lengthy prerequisites, and will not be discussed here at all.  The reader is referred  to \cite{kato, FK}.
\ms

Now we specialise to the situation where $X=\Spec(\mathcal{O}_F)$ as in the earlier sections, and $X_S=\Spec(\mathcal{O}_F[1/S])$ for  a finite set of primes $S$. 
As indicated above, a $p$-adic $L$-function  is supposed to be a section of
$\mathcal{D}$ on $\mathcal{M}_S$:
$$L(X, \cdot)\in \G(\mathcal{M}_S, \mathcal{D}).$$
In this paper, we have constructed in section 3 
$$CS_c(\cdot)$$an additive version of such a section, at least for a restricted family.
The optimistic wish referred to in the abstract is a comparison
$$CS_c(\cdot) \sim \log L(X, \cdot).$$
To effect such a comparison, one would obviously have to relate the $\Z_p$-torsors constructed in an elementary fashion to the determinant line bundles. I am told by Dan Freed that such a comparison is not available even in topological Chern-Simons theory, and may be rather difficult. Nevertheless, the strong analogy between the multiplicativity of $L$-functions and the glueing formula seems worth investigating in detail.
\ms

Bruce Bartlett has emphasised to me the importance of Reidemeister torsion within this circle of ideas. Indeed, Witten \cite{witten} had already noted that the square root of Reidemeister torsion appears as the main contribution to the semi-classical Chern-Simons wavefunction by a classical minimum. Since there has been for some time a folklore analogy in number theory between $L$-functions and Reidemeister torsion (cf. \cite{deninger}), a reasonable avenue of investigation might be a definition of an arithmetic Reidemeister torsion using the arithmetic Chern-Simons functional, which could then be compared to the $L$-amplitude.
\ms

The main point is important enough to be worth repeating: it is a major unsolved problem of arithmetic geometry to define global sections of determinant line bundles satisfying the natural properties outlined above. The speculations of this section were motivated by the wishful thought that ideas from physics could be employed to effect such a definition. The constructions of the first three sections can be regarded as small beginning steps in this direction.

\section{Appendix A. Conjugation on group cochains}

We compute cohomology of a topological group $G$ with coefficients in a topological abelian group $M$ with continuous $G$-action using the complex whose component of degree $i$ is
$C^i(G, M)$, the continuous maps from $G^i$ to $M$. The differential
$$d: C^i(G,M)\rightarrow C^{i+1}(G,M)$$
is given by
$$df(g_1, g_2, \ldots, g_{i+1})$$
$$=g_1f(g_2, \ldots, g_{i+1})+
\sum_{k=1}^i f(g_1, \ldots, g_{k-1}, g_k g_{k+1}, g_{k+2}, \ldots, g_{i+1})+
(-1)^{i+1} f(g_1, g_2, \ldots, g_i).$$
We denote by
$$B^i(G,M)\subset Z^i(G,M)\subset C^i(G,M)$$
the images and the kernels of the differentials, the coboundaries and the cocycles, respectively. The cohomology is then defined as
$$H^i(G,M):=Z^i(G,M)/B^i(G,M).$$
There is a natural right action of $G$ on the cochains given by 
$$a:  c\mapsto c^a:=a^{-1}c\circ Ad_a,$$
where $Ad_a$ refers to the conjugation action of $a$ on $G^i$. 
\begin{lem} The $G$ action on cochain commutes with $d$:
$$d(c^a)=(dc^a)$$
for all $a\in G$.
\end{lem}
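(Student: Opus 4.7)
The plan is to verify the identity $d(c^a) = (dc)^a$ by direct computation, unwinding both sides on an arbitrary tuple $(g_1, \ldots, g_{i+1}) \in G^{i+1}$ and matching terms.

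First I would write out the action explicitly: for $c \in C^i(G,M)$ and $a \in G$,
$$c^a(g_1, \ldots, g_i) = a^{-1} \cdot c(a g_1 a^{-1}, \ldots, a g_i a^{-1}).$$
Then I would expand $d(c^a)(g_1, \ldots, g_{i+1})$ using the differential formula from the appendix, producing a leading term $g_1 \cdot c^a(g_2, \ldots, g_{i+1})$, a sum of middle terms of the form $c^a(g_1, \ldots, g_k g_{k+1}, \ldots, g_{i+1})$, and the tail $(-1)^{i+1} c^a(g_1, \ldots, g_i)$, and substitute the definition of $c^a$ into each.

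Next I would compute $(dc)^a(g_1, \ldots, g_{i+1}) = a^{-1} \cdot dc(a g_1 a^{-1}, \ldots, a g_{i+1} a^{-1})$ by the same formula applied to the conjugated arguments. The comparison rests on two elementary facts: (i) since $\mathrm{Ad}_a$ is a group homomorphism, $(a g_k a^{-1})(a g_{k+1} a^{-1}) = a (g_k g_{k+1}) a^{-1}$, which makes the middle terms agree term-by-term, and (ii) for the leading term, one uses that $M$ is a $G$-module and so $g_1 \cdot (a^{-1} m) = a^{-1} \cdot (a g_1 a^{-1}) \cdot m$; this identifies the first term of $d(c^a)$ with the first term of $(dc)^a$. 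The tail term is immediate, since no multiplication or $G$-action is involved.

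Once these three term-by-term matches are in place, summing them yields $d(c^a) = (dc)^a$. The proof is entirely mechanical; there is no real obstacle, only the book-keeping needed to carry the factor $a^{-1}$ past the outer $G$-action in the leading term via the module axiom. The only conceptual input is that conjugation $\mathrm{Ad}_a : G \to G$ is an automorphism, which is what allows it to commute with the bar-resolution differential in the cochain complex.
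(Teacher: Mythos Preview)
Your proposal is correct and matches the paper's own proof essentially line for line: both expand $d(c^a)$ on a generic tuple, use that $\Ad_a$ is a homomorphism to match the middle terms, and invoke the module identity $g_1\cdot(a^{-1}m)=a^{-1}\cdot(\Ad_a(g_1))\cdot m$ to handle the leading term before recognising the result as $(dc)^a$.
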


\begin{proof}
If $c\in C^i(G,M)$, then
$$d(c^a)(g_1, g_2, \ldots, g_{i+1})=g_1a^{-1}c(Ad_a(g_2), \ldots, Ad_a(g_{i+1}))$$
$$+
\sum_{k=1}^i a^{-1}c(Ad_a(g_1), \ldots, Ad_a( g_{k-1}), Ad_a( g_k) Ad_a( g_{k+1}), Ad_a(g_{k+2}), \ldots, Ad_a(g_{i+1}))$$
$$+
(-1)^{i+1} a^{-1}c(Ad_a(g_1), Ad_a(g_2), \ldots, Ad_a(g_i))$$
$$=
a^{-1}Ad_a(g_1)c(Ad_a(g_2), \ldots, Ad_a(g_{i+1}))$$
$$+
\sum_{k=1}^i a^{-1}c(Ad_a(g_1), \ldots, Ad_a( g_{k-1}), Ad_a( g_k) Ad_a( g_{k+1}), Ad_a(g_{k+2}), \ldots, Ad_a(g_{i+1}))$$
$$+
(-1)^{i+1} a^{-1}c(Ad_a(g_1), Ad_a(g_2), \ldots, Ad_a(g_i))$$
$$=a^{-1}(dc)(Ad_a(g_1), Ad_a(g_2), \ldots, Ad_a(g_{i+1})$$
$$=(dc)^a(g_1, g_2, \ldots, g_{i+1}).$$
\end{proof}
We use also the notation $(g_1, g_2, \ldots, g_i)^a:=Ad_a(g_1, g_2, \ldots, g_i)$.
It is well known that this action is trivial on cohomology. We wish to show the construction of explicit $h_a$ with the property that $$c^a=c+dh_a$$
for cocycles of degree 1, 2, and 3. The first two are relatively straightforward, but degree 3 is somewhat delicate. In degree 1, first note that
$c(e)=c(ee)=c(e)+ec(e)=c(e)+c(e)$, so that $c(e)=0$. Next, $0=c(e)=c(gg^{-1})=c(g)+gc(g^{-1})$, and hence, $c(g^{-1})=-g^{-1}c(g).$
Therefore, 
$$c(aga^{-1})=c(a)+ac(ga^{-1})=c(a)+ac(g)+agc(a^{-1})=c(a)+ac(g)-aga^{-1}c(a).$$
From this, we get
$$c^a(g)=c(g)+a^{-1}c(a)-ga^{-1}c(a).$$
That is,
$$c^a=c+dh_a$$
for the zero cochain $h_a(g)=a^{-1}c(a).$

\ms

\begin{lem}
For each $c\in Z^i(G,M)$ and $a\in G$, we can associate an $$h^{i-1}_a[c]\in C^{i-1}(G,M)/B^{i-1}(G,M)$$ in such a way that
$$(1)\ \ \ \ \ c^a-c=dh^{i-1}_a[c];$$
$$(2)\ \ \ \ h_{ab}^{i-1}[c]=(h^{i-1}_a[c])^b+h^{i-1}_b[c].$$
\end{lem}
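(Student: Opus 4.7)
The plan is to obtain $h^{i-1}_a[c]$ by dualizing an explicit chain homotopy between the conjugation chain map and the identity on the bar resolution of $G$. First, I would realize $C^*(G,M) = \Hom_{\Z G}(B_*, M)$, where $B_* \to \Z$ is the standard bar resolution, so that the action $c \mapsto c^a$ on cochains is the dual of a $\Z G$-chain map $T_a : B_* \to B_*$ that lifts the identity on $\Z$. Since $B_*$ is a projective resolution of the trivial module and $T_a$ lifts $\id_{\Z}$, $T_a$ is chain-homotopic to the identity; there is an explicit homotopy $s_a : B_i \to B_{i+1}$ with $\partial s_a + s_a \partial = T_a - \id$, produced by the usual simplicial prism decomposition, of the schematic form
\[
s_a[g_1|\cdots|g_i] = \sum_{k=0}^{i} (-1)^k\, a\cdot [\Ad_a(g_1)|\cdots|\Ad_a(g_k)|a|g_{k+1}|\cdots|g_i],
\]
up to adjustment of signs and the $\Z G$-module structure.

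Second, I would set $h^{i-1}_a[c] := s_a^*(c) \in C^{i-1}(G,M)$ for $c \in Z^i(G,M)$ and verify (1) directly. Viewing $c$ as a $\Z$-linear functional on $B_i$ and using the cocycle condition $c \circ \partial = 0$ on $B_{i+1}$, the homotopy identity yields
\[
dh^{i-1}_a[c] \;=\; c \circ s_a \circ \partial \;=\; c \circ (T_a - \id - \partial \circ s_a) \;=\; c^a - c
\]
up to the sign dictated by the convention for $d$ in the previous lemma.

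Third, for property (2), I would observe that $s_a T_b + s_b$ is also a chain homotopy between $\id$ and $T_{ab} = T_a T_b$, since $T_b$ is itself a chain map commuting with $\partial$. Hence $s_{ab}$ and $s_a T_b + s_b$ differ by a chain map of degree $+1$, whose dual sends cocycles to coboundaries. Dualizing then gives, in $C^{i-1}/B^{i-1}$,
\[
h^{i-1}_{ab}[c] \;=\; T_b^*\, s_a^*(c) \;+\; s_b^*(c) \;=\; (h^{i-1}_a[c])^b \;+\; h^{i-1}_b[c],
\]
which is (2). Property (2) can be read as a cocycle condition for the assignment $a \mapsto h^{i-1}_a$ with values in the $G$-module $C^{i-1}(G,M)/B^{i-1}(G,M)$, and this is exactly what the projective-cover argument produces.

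The main obstacle is the explicit verification in degree $i=3$: pinning down the signs in the formula for $s_a$, checking the prism identity $\partial s_a + s_a \partial = T_a - \id$ at that degree, and confirming that $s_{ab}$ and $s_a T_b + s_b$ differ by a chain map whose dual is null on cocycles. Because the target is $C^{i-1}/B^{i-1}$ rather than $C^{i-1}$, one must organize the choices coherently in $a$, so that the higher coherence relating the homotopies $s_a, s_b, s_{ab}$ also lands in $B^{i-1}$ after pairing with a cocycle. This is the delicate combinatorial content that the authors have deferred to Appendix B, where a groupoid-theoretic framework presumably packages these higher coherences in a uniform way.
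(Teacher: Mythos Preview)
Your approach is correct but genuinely different from the proof the paper gives for this lemma in Appendix~A. The paper argues by dimension shifting: it embeds $M$ into the acyclic induced module $C^1(G,M)$ with quotient $N$, writes $c=dF$ for some $F\in C^{i-1}(G,C^1(G,M))$, applies the inductive hypothesis to the image cocycle $f\in Z^{i-1}(G,N)$ to get $k_a$, and then sets $h_a=F^a-F-d(s\circ k_a)$. Your route instead works directly on the bar resolution and produces $h_a$ via the prism homotopy $s_a$; this is exactly the strategy developed in Appendix~B, where the explicit formula
\[
h_{a,f}(g_1,\ldots,g_n)=\sum_{r=0}^{n}(-1)^r f(g_1,\ldots,g_r,a^{-1},\ad_a g_{r+1},\ldots,\ad_a g_n)
\]
is derived from the cylinder $[n]\times[1]$. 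The dimension-shifting proof is shorter and avoids all combinatorics, but gives no explicit cochain; your approach (and Appendix~B's) yields formulas one can actually compute with, at the cost of tracking signs and higher coherences.

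One point in your treatment of~(2) deserves tightening. You assert that $D:=s_{ab}-s_aT_b-s_b$ is ``a chain map of degree $+1$, whose dual sends cocycles to coboundaries.'' What is immediate is only that $\partial D+D\partial=0$, so $D$ is a degree-$1$ \emph{cycle} in $\uHom_{\Z G}(B_*,B_*)$; dualizing such a $D$ sends cocycles to \emph{cocycles}, not a priori to coboundaries. To get coboundaries you need $D$ itself to be null-homotopic, i.e.\ $D=\partial H-H\partial$ for some degree-$2$ map $H$. This does hold---either by the abstract fact that $\uHom_{\Z G}(B_*,B_*)$ is acyclic in negative degrees since $B_*$ is a projective resolution of $\Z$, or explicitly via the double-prism map $h_{a,b,f}$ of Appendix~B (built from $[n]\times[2]$), which realizes $h_{b,f}-h_{ab,f}+(h_{a,f})^b$ as $-dh_{a,b,f}$. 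You allude to this higher coherence at the end, but it is precisely the missing ingredient in your third paragraph, not merely a bookkeeping issue.
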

\begin{proof}
This is clear for $i=0$ and we have shown above the construction of $h^0_a[c]$ for $c\in Z^1(G,M)$ satisfying (1). Let us check the condition (2):
$$h^0_{ab}[c](g)=(ab)^{-1}c(ab)$$
$$=b^{-1}a^{-1}(c(a)+ac(b))=b^{-1}h^0_a[c](Ad_b(g))+h^0_b[c](g)=(h^0_a[c])^b(g)+h^0_b[c](g).$$
We prove the statement using induction on $i$, which we now assume to be $\geq 2$.
For a module $M$, we have the exact sequence
$$0\rightarrow M\rightarrow C^1(G,M)\rightarrow N\rightarrow 0,$$
where $C^1(G,M)$ has the right regular action of $G$ and $N=C^1(G,M)/M$. Here, we give $C^1(G,M)$ the topology of pointwise convergence. There is a canonical linear splitting $s: N\rightarrow C^1(G,M)$ with image the group of functions $f$ such that $f(e)=0$, using which we topologise $N$. According to \cite{mostow}, proof of 2.5, the $G$-module $C^1(G,M)$ is acyclic\footnote{The notation there for $C^1(G,M)$ is $F^0_0(G,M)$. One difference is that Mostow uses the complex $E^*(G,M)$ of equivariant homogeneous cochains in the definition of cohomology. However, the  isomorphism $E^n\rightarrow C^n$ that sends $f(g_0, g_1, \ldots, g_n)$ to
$f(1, g_1, g_1g_2, \ldots, g_1g_2\cdots g_n)$ identifies the two definitions. This is the usual comparison map one uses for discrete groups, which clearly preserves continuity. }, that is, $$H^i(G, C^1(G,M))=0$$ for $i>0$. Therefore, given a cocycle $c\in Z^i(G, M)$, there is an $F\in C^{i-1}(G, C^1(G,M))$ such that
its image $f \in C^{i-1}(G,N)$ is a cocycle and $dF=c$. Hence, $d(F^a-F)=c^a-c$. Also, by induction, there is a $ k_a\in C^{i-2}(G,N)$ such that $f^a-f=dk_a$ and $k_{ab}=(k_a)^b+k_b+dl$ for some $l\in C^{i-3}(G,N)$ (zero if $i=2$). Let $K_a=s\circ k_a $ and put $$h_a=F^a-F-dK_a.$$ Then the image of $h_a$ in $N$ is zero, so $h_a$ takes values in $M$, and $dh_a=c^a-c$. Now we check property (2). Note that
$$K_{ab}=s\circ k_{ab} =s\circ (k_a)^b+s\circ k_b+s\circ dl.$$ But $s\circ (k_a)^b-(s\circ k_a)^b$ and $s\circ dl-d(s\circ l)$ both have image in $M$. Hence, $K_{ab}=K_a^b+K_b+d(s\circ l)+m$ for some cochain $m\in C^{i-2}(G,M)$. From this, we deduce $$dK_{ab}=(dK_a)^b+dK_b+dm,$$from which we get
$$h_{ab}=F^{ab}-F-dK_{ab}=(F^a)^b-F^b+F^b-F-(dK_a)^b-dK_b-dm=(h_a)^b+h_b+dm.$$
\end{proof}
\section{Appendix B. Conjugation action on group cochains: categorical approach}

\author{\bf by Behrang Noohi}
\ms

In this section, an alternative and conceptual proof of Lemma 6.2 is outlined. Although not strictly necessary for the purposes of this paper, we believe that a functorial theory of secondary classes in group cohomology will be important in future developments. This point has also been emphasised to M.K. by Lawrence Breen. More details and elaborations will follow in a forthcoming publication by B.N.

\subsection{Notation}
In what follows $G$ is a group and $M$ is a left $G$-module. The action is denoted by
$\act{a}{m}$. The left conjugation action of $a\in G$ on $G$ is denoted
$\ad_a(x)=axa^{-1}$. We have an induced right action on 
$n$-cochains  $f \: G^{n} \to M$ given by
\[f^a(\mathbf{g}):=\act{a^{-1}}{(f(\ad_{a}\mathbf{g}))}.\]
Here, $\mathbf{g} \in G^n$ is an $n$-chain, and $\ad_a\mathbf{g}$ is defined componentwise.

In what follows, $[n]$ stands for the ordered set $\{0,1,\ldots,n\}$, viewed as
a category.

\subsection{Idea}
The above action on cochains respects the differential, hence passes to cohomology. It is 
well known that the induced action on cohomology is trivial. That is, given an
$n$-cocycle $f$ and any element $a\in G$, the difference $f^a - f$ is a coboundary.
In this appendix we explain how to construct an $(n-1)$-cochain
$h_{a,f}$ such that $d(h_{a,f})=f^a- f$. The construction, presumably well known,
uses standard ideas from simplicial homotopy theory. The general case of this construction,
as well as the missing proofs of some of the statements in this appendix will appear 
in a separate article.

Let $\mcG$ denote the one-object category (in fact, groupoid)  with morphisms $G$.
For an element $a \in G$, we have an action of $a$ on $\mcG$ which, by abuse of notation,
we will denote again by $\ad_a \: \mcG \to \mcG$; it fixes the unique object and acts
on morphisms by conjugation by $a$.

The main point in the construction of the cochain $h_{a,f}$ is that there is a 
``homotopy'' (more precisely, a natural transformation) $H_a$ from  the 
identity functor $\id \: \mcG \to \mcG$ to $\ad_a \: \mcG \to \mcG$.  
The homotopy between $\id$ and $\ad_a$  is given by the functor
$H_a \: \mcG\times[1] \to \mcG$ defined by 
      \[ H_a|_0= \id, \ \ H_a|_1=\ad_a, \text{ and } H_a(\iota) = a^{-1}.\]
It is useful to visualise the category $\mcG\times[1]$ as
    \[\xymatrix@C=30pt@R=12pt@M=3pt{ 0 \ar[r]^{\iota} \ar@(ur,ul)[]_G 
                          & 1 \ar@(ur,ul)[]_G}.\]

\subsection{Cohomology of categories}
We will use multiplicative notation for morphisms in a category, namely,
the  composition of $g \: x \to y$ with $h \: y \to z$ is denoted  $gh \: x \to z$.

Let $\mcC$ be a small category and $M$ a left $\mcC$-module, that is, a functor 
$M\: \mcC^{\text{op}} \to \mathbf{Ab}$, $ x \mapsto M_x$, to the category of abelian groups
(or your favorite linear category). 
Note that when $\mcG$ is as above, this is  nothing but a left $G$-module 
in the usual sense. For an arrow $g \: x \to y$ in $\mcC$, we denote the induced map
$M_y \to M_x$ by $m \mapsto \act{g}{m}$.

Let $\mcC^{[n]}$ denote the set of all $n$-tuples $\bfg$ of composable arrows in  $\mcC$,
    \[\bfg \ = \ \bullet \xrightarrow{g_1} \bullet \xrightarrow{g_2} 
           \cdots \xrightarrow{g_n}\bullet.\]
We refer to such a $\bfg$ as an  {\bf $n$-cell} in $\mcC$; this is the same thing
as a functor $[n] \to \mcC$,
which we will denote, by abuse of notation, again by $\bfg$. 

An {\bf $n$-chain} in $\mcC$ is an element in the free abelian
group $\oC_n(\mcC,\mathbb{Z})$ generated by the set $\mcC^{[n]}$ of $n$-cells.
For an $n$-cell $\bfg$ as above, we let $s\bfg \in \Ob\mcC$ denote  the source of $g_1$.

By an {\bf $n$-cochain} on $\mcC$ with values in $M$ we mean a map
$f$ that assigns to any $n$-cell $\bfg \in \mcC^{[n]}$ an element in $M_{s\bfg}$.
Note that, by linear extension, we can evaluate $f$ on any $n$-chain 
in which all $n$-cells share a common source point.

The $n$-cochains form an abelian group  $\oC^n(\mcC,M)$.
The {\bf cohomology} groups $\oH^n(\mcC,M)$, $n\geq 0$, are defined using the  
cohomology complex $\oC^{\bullet}(\mcC,M)$: 
   \[  0 \xrightarrow{}\oC^0(\mcC,M) \xrightarrow{d}  \oC^1(\mcC,M) \xrightarrow{d}
   \cdots \xrightarrow{d} \oC^n(\mcC,M) \xrightarrow{d} \oC^{n+1}(\mcC,M) 
   \xrightarrow{d} \cdots\]
where the differential 
\[d \:\oC^n(\mcC,M) \to \oC^{n+1}(\mcC,M)\] 
is defined by
  \begin{multline*}
   df(g_1,g_2,\ldots,g_{n+1})   =  \act{g_1}(f(g_2,\ldots,g_{n+1})) +
   \underset{1\leq i \leq n}\sum(-1)^if(g_1,\ldots,g_ig_{i+1},\ldots,g_{n+1}) \\
   + (-1)^{n+1}f(g_1,g_2,\ldots,g_{n}).
  \end{multline*}

A left $G$-module $M$ in the usual sense gives rise to a left module on $\mcG$, 
which we denote again by $M$. We sometimes denote $\oC^{\bullet}(\mcG,M)$
by $\oC^{\bullet}(G,M)$. Note that the corresponding cohomology groups coincide with
the group cohomology  $\oH^n(G,M)$.

The cohomology complex $\oC^{\bullet}(\mcC,M)$ and the cohomology groups $\oH^n(\mcC,M)$
are functorial in $M$. They are also functorial in $\mcC$ in the following sense. 
A functor $\varphi \: \mcD \to \mcC$ gives rise to a $\mcD$-module  
$\varphi^*M:=M\circ \varphi \: \mcD^{op} \to \mathbf{Ab}$. We have a  map of complexes 
  \begin{equation}\label{Eq:1} \varphi^* \: \oC^{\bullet}(\mcC,M) 
     \to \oC^{\bullet}(\mcD,\varphi^*M), 
  \end{equation}
which gives rise to the maps
  \[ \varphi^* \: \oH^{n}(\mcC,M) \to \oH^{n}(\mcD,\varphi^*M) \]
on cohomology, for all $n\geq 0$.

\subsection{Definition of the cochains $h_{a,f}$}
The flexibility we gain by working with chains on general categories allows us
to import standard ideas from topology to this setting. The following definition
of the cochains $h_{a,f}$ is an imitation of a well known construction in topology.

Let $f \in \oC^{n+1}(G,M)$ be an $(n+1)$-cochain, and $a \in G$ an element. Let
$H_a \: \mcG\times[1] \to \mcG$ be the corresponding natural
transformation. We define $h_{a,f} \in \oC^{n}(G,M)$ by
      \[h_{a,f}(\bfg)=f(H_a(\bfg\times[1])).\]
Here, $\bfg \in \mcC^{[n]}$ is an $n$-cell in $\mcG$, so $\bfg\times[1]$ is an  
$(n+1)$-chain in $\mcG\times[1]$, namely, the cylinder over $\bfg$.

To be more precise, we are using the notation $\bfg\times[1]$ for the image of 
the fundamental class of $[n]\times [1]$ in $\mcG\times[1]$ under the 
functor $\bfg \times [1] \: [n]\times [1] \to \mcG \times [1]$. 
We visualize $[n]\times [1]$ as
     \[\xymatrix@C=10pt@R=12pt@M=6pt{ 
         (0,1) \ar[r]        & (1,1) \ar[r]         & \ar[r] \cdots &(n,1)\\
         (0,0) \ar[r] \ar[u] & (1,0) \ar[r] \ar[u]  & \ar[r] \cdots &
         (n,0)   \ar[u] }\]
Its fundamental class  is the alternating sum of the $(n+1)$-cells
     \[\xymatrix@C=10pt@R=12pt@M=6pt{ 
                       & &   (r,1) \ar[r]               & \cdots  \ar[r] &(n,1)\\
         (0,0) \ar[r]  & \cdots  \ar[r] &(r,0) \ar[u]   &  &   }   \]
in  $[n]\times [1]$, for $0\leq r \leq n$. Therefore, 
   \begin{equation}
     h_{a,f}(\bfg) = \underset{0\leq r \leq n}{\sum} (-1)^r
     f(g_1,\ldots,g_r,a^{-1},\ad_a{g_{r+1}},\ldots,\ad_a{g_{n}}).
   \end{equation}

The following proposition can be proved using a variant of Stokes' 
formula for cochains.

\begin{prop}{\label{P:homotopy}}
The graded map $h_{-,a}\: \oC^{\bullet+1}(G, M) \to \oC^{\bullet}(G,M)$ 
is a chain homotopy between the chain maps 
     \[\id,(-)^a \: \oC^{\bullet}(G,M) \to \oC^{\bullet}(G,M).\]
That is,
     \[ h_{a,df} + d(h_{a,f}) = f^a - f\]
for every $(n+1)$-cochain $f$. In particular, if $f$ is an $(n+1)$-cocycle, 
then  $d(h_{a,f}) = f^a - f$.  
\end{prop}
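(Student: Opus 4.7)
The plan is to prove the proposition via the classical prism construction from simplicial homotopy theory, transported into the categorical cochain calculus set up in this appendix. The functor $H_a \: \mcG \times [1] \to \mcG$ pushes the standard triangulation of the cylinder $\bfg \times [1]$ forward to an $(n+1)$-chain $P_a(\bfg) := H_a(\bfg \times [1])$ in $\mcG$, so that by construction $h_{a,f}(\bfg) = f(P_a(\bfg))$. The alternating sum in the explicit formula for $h_{a,f}$ is precisely the standard decomposition of $[n] \times [1]$ into $(n+1)$-simplices, with the edge $H_a(\iota) = a^{-1}$ inserted in the $r$-th slot.

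The key geometric input is the chain-level boundary identity
\[
  \partial P_a(\bfg) \ = \ \ad_a(\bfg) \ - \ \bfg \ - \ P_a(\partial \bfg),
\]
where the three terms correspond to the top face of the prism (on which $H_a$ restricts to $\ad_a$), the bottom face (on which $H_a$ is the identity), and the lateral faces, each a prism over a face of $\bfg$. This identity is purely combinatorial and would be verified by expanding both sides as signed sums of $(n+1)$-cells and matching them cell by cell, the signs being dictated by the standard triangulation of the prism.

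Evaluating $f$ on this identity and using the Stokes-type duality $(df)(C) = f(\partial C)$ adapted to the twisted coefficients, one obtains
\[
  h_{a,df}(\bfg) \ = \ (df)(P_a(\bfg)) \ = \ f(\ad_a \bfg) \ - \ f(\bfg) \ - \ f(P_a(\partial \bfg)).
\]
The middle term is $f(\bfg)$; the first term equals $f^a(\bfg)$ once the module twist carried by the initial edge of $P_a(\bfg)$ is absorbed into the $\act{g_1}{(-)}$ summand of $d$; and the third term is identified with $-d(h_{a,f})(\bfg)$ by applying $h_{a,f}$ face-by-face to $\partial\bfg$. Rearranging yields the homotopy identity, and the cocycle case is then immediate since $df = 0$ forces $h_{a,df} = 0$.

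The main obstacle lies in the careful bookkeeping of signs together with the twist $\act{g_1}{(-)}$ built into the first summand of the group-cochain differential. In the constant-coefficient setting one has $(df)(C) = f(\partial C)$ literally, but for twisted coefficients a face can have source vertex different from $s\bfg$, and transporting values back to $M_{s\bfg}$ via the module action is exactly what produces the $g_1$-twist in the definition of $d$. One must therefore verify that the insertion $H_a(\iota) = a^{-1}$ interacts with this transport so that the top-face contribution gives $f^a(\bfg)$ on the nose, and that the lateral-face contributions reassemble precisely into $d(h_{a,f})(\bfg)$ without leftover twists. Writing out the case $n = 1$ in full already reveals the general mechanism, and the general case follows by a systematic application of the prism decomposition.
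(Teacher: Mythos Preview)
Your proposal is correct and follows precisely the route the paper indicates. Note that the paper does not actually give a proof of this proposition: it only offers the single sentence ``The following proposition can be proved using a variant of Stokes' formula for cochains,'' and elsewhere in the appendix remarks that the missing proofs will appear in a separate article. Your prism decomposition $P_a(\bfg)=H_a(\bfg\times[1])$, the boundary identity $\partial P_a(\bfg)=\ad_a(\bfg)-\bfg-P_a(\partial\bfg)$, and the Stokes-type pairing $(df)(C)=f(\partial C)$ (suitably twisted) are exactly what that hint points to, so there is nothing to compare beyond noting that you have supplied the details the paper omits. Your identification of the main bookkeeping issue---that the module twist $\act{g_1}{(-)}$ in the first summand of $d$ must be matched against the initial edge $H_a(\iota)=a^{-1}$ so that the top face yields $f^a(\bfg)=\act{a^{-1}}{f(\ad_a\bfg)}$ rather than merely $f(\ad_a\bfg)$---is accurate, and the $n=0,1$ checks do indeed display the general mechanism.
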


\subsection{Composing natural transformations}
Given an $(n+1)$-cochain  $f$, and elements $a,b \in G$, we can construct three 
$n$-cochains:
$h_{a,f}$, $h_{b,f}$ and $h_{ab,f}$. A natural question to ask is whether
these three cochains satisfy a cocycle condition. It turns out that the answer
is yes, but only up to a coboundary $dh_{a,b,f}$. Below we explain how 
$h_{a,b,f}$ is constructed. In fact, we construct cochains $h_{a_1,\ldots,a_k,f}$, for
any $k$ elements $a_i \in G$, $1\leq i \leq k$, and study their relationship.

Let $f \in \oC^{n+k}(G,M)$ be an $(n+k)$-cochain. Let 
$\bfa=(a_1,\ldots,a_k)\in G^{\times k}$. Consider the category $\mcG\times [k]$,
      \[\xymatrix@C=30pt@R=12pt@M=3pt{ 0 \ar[r]^{\iota_0} \ar@(ur,ul)[]_G 
          & 1 \ar@(ur,ul)[]_G    \ar[r]^(0.37){\iota_1} 
          & \ \ \cdots \ \ \ar[r]^(0.6){\iota_{k-1}} & k  \ar@(ur,ul)[]_G.}\]

Let $H_{\bfa} \: \mcG\times [k] \to \mcG$ be the functor such that 
$\iota_i \mapsto a_{k-i}^{-1}$ and $H_{\bfa}|_{\{0\}}=\id_G$. 
(So, $H_{\bfa}|_{\{k-i\}}=\ad_{a_{i+1}\cdots a_k}$.) Define
$h_{\bfa,f} \in \oC^{n}(G,M)$ by
    \begin{equation}{\label{Eq:haf}}
      h_{\bfa,f}(\bfg)=f(H_{\bfa}(\bfg\times[k])).
    \end{equation}
Here, $\bfg \in \mcC^{[n]}$ is an $n$-cell in $\mcG$, so $\bfg\times[k]$ is an  
$(n+k)$-chain in $\mcG\times[k]$.

To be more precise, we are using the notation $\bfg\times[k]$ for  the image of 
the fundamental class of $[n]\times [k]$ in $\mcG\times[k]$ under the 
functor $\bfg \times [k] \: [n]\times [k] \to \mcG \times [k]$. We visualize
$[n]\times [k]$ as
     \[\xymatrix@C=10pt@R=12pt@M=6pt{ 
         (0,k) \ar[r]        & (1,k) \ar[r]         & \ar[r] \cdots &  (n,k)        \\
           \vvdots\ar[u]     &   \vvdots \ar[u]     &               & \vvdots\ar[u] \\
         (0,1) \ar[r] \ar[u] & (1,1) \ar[r] \ar[u]  & \ar[r] \cdots &  (n,1) \ar[u] \\
         (0,0) \ar[r] \ar[u] & (1,0) \ar[r] \ar[u]  & \ar[r] \cdots &  (n,0) \ar[u] }\]
Its fundamental class is the $(n+k)$-chain
       \[\underset{P}\sum (-1)^{|P|}P,\]
where $P$ runs over (length $n+k$) paths starting from $(0,0)$ and ending in 
$(n,k)$. Note that such paths correspond to $(k,n)$ shuffles; $|P|$ stands for 
the parity of the shuffle (which is the same as the number of squares above the 
path in the $n\times k$ grid).

The most economical way to describe the relations between various $h_{\bfa,f}$ 
is in terms of the cohomology complex of the right module 
    \[\bbM^{\bullet}:=\uHom\left(\oC^{\bullet}(G,M),\oC^{\bullet}(G,M)\right).\]
Here, $\uHom$ stands for the enriched hom in the category of chain complexes, 
and the right action of $G$ on $\bbM^{\bullet}$ 
is induced from the right action $f \mapsto f^a$ of $G$ on the
$\oC^{\bullet}(G,M)$ sitting on the right. 
The differential on $\bbM^{\bullet}$ is defined by
    \[d_{\bbM^{\bullet}}(u)=(-1)^{|u|} u\circ d_{\oC^{\bullet}(G,M)}-
           d_{\oC^{\bullet}(G,M)}\circ u,\]
where $|u|$ is the degree of the homogeneous $u \in \oC^{\bullet}(G,M)$.

Note that, for every $\bfa \in G^{\times k}$, we have $h_{\bfa,f} \in \bbM^{-k}$.
This defines a $k$-cochain  on $G$ of degree $-k$ with values in $\bbM^{\bullet}$,
    \[h^{(k)}\: \bfa \ \mapsto h_{\bfa,-}, \ \bfa \in G^{\times k}.\]
We set $h^{(-1)}:=0$.  Note that $h^{(0)}$ is the element in $\bbM^{0}$
corresponding to the identity map $\id \: \oC^{\bullet}(G,M) \to \oC^{\bullet}(G,M)$.
  
The relations between various $h_{\bfa,f}$ can be packaged in a simple differential
relation. As in the case $k=0$ discussed in Proposition \ref{P:homotopy}, 
this proposition can be proved using a variant of Stokes' formula for cochains.

\begin{prop}{\label{P:relations}}
For every $k \geq -1$, we have $d_{\bbM^{\bullet}}(h^{(k+1)})=d(h^{(k)})$.
\end{prop}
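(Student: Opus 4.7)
My plan is to prove this identity as a Stokes-type formula in the bar complex, extending the $k=0$ computation behind Proposition B.3 to the full grid $[n]\times[k+1]$. The key input is the standard product-boundary decomposition
$$\partial\bigl(\bfg\times[k+1]\bigr) = (\partial \bfg)\times[k+1] + (-1)^n\, \bfg\times\partial[k+1],$$
which holds for any $n$-cell $\bfg$ and reflects the Eilenberg--Zilber shuffle decomposition of the fundamental class of $[n]\times[k+1]$.

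The strategy is then to apply the functor $H_\bfa$ (which, being a functor, commutes with face and composition maps) and pair the result against an $(n+k+1)$-cochain $f$. The term $f\circ H_\bfa$ applied to $\partial(\bfg\times[k+1])$ equals $h_{\bfa,df}(\bfg)$ via $f(\partial C) = (df)(C)$, while $f\circ H_\bfa$ applied to the horizontal piece $(\partial \bfg)\times[k+1]$ equals $h_{\bfa,f}(\partial \bfg) = (dh_{\bfa,f})(\bfg)$. Combined with the internal sign $(-1)^{|h_{\bfa,-}|}$ in the definition of $d_{\bbM^\bullet}$ and the $(-1)^n$ from the product boundary, these two terms assemble into the left-hand side $d_{\bbM^\bullet}(h^{(k+1)})(\bfa)(f)$, and the vertical contribution $\bfg\times\partial[k+1]=\sum_{i}(-1)^i \bfg\times\partial_i[k+1]$ is forced to equal $d(h^{(k)})(\bfa)(f)$.

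It then remains to verify the vertical identification face-by-face. The face $\partial_{k+1}[k+1]$ (omitting the top vertex) cuts off $\iota_k\mapsto a_1^{-1}$, and the restriction of $H_\bfa$ to the sub-cube agrees with $H_{(a_2,\ldots,a_{k+1})}$, contributing the term $h_{(a_2,\ldots,a_{k+1}),f}(\bfg)$. An intermediate face $\partial_j[k+1]$ with $0<j<k+1$ composes $\iota_{j-1}\iota_j$, so that $H_\bfa$ maps this composite to $a_{k+2-j}^{-1}a_{k+1-j}^{-1}=(a_{k+1-j}a_{k+2-j})^{-1}$; this collapses the adjacent pair at position $p=k-j+1$ of $\bfa$ into their product, reproducing the middle terms of the group-cochain differential. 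The bottom face $\partial_0[k+1]$ removes $\iota_0\mapsto a_{k+1}^{-1}$; the remaining functor agrees with $H_{(a_1,\ldots,a_k)}$ up to an overall $\ad_{a_{k+1}}$ on cells, and pairing with $f$ produces $h_{(a_1,\ldots,a_k),f}$ evaluated on $\ad_{a_{k+1}}(\bfg)$, which, by the definition $(u^a)(\bfg)=a^{-1}\cdot u(\ad_a\bfg)$ of the right action on $\bbM^\bullet$, is precisely the ``outer'' term $(h^{(k)}(a_1,\ldots,a_k))^{a_{k+1}}$ in $d(h^{(k)})(\bfa)$.

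The main obstacle I anticipate is not the conceptual picture but the careful sign bookkeeping: one must reconcile the $(-1)^n$ from the product boundary, the $(-1)^i$ from $\partial[k+1]$, the $(-1)^{|u|}$ in $d_{\bbM^\bullet}$, and the fact that the indexing convention $\iota_i\mapsto a_{k+1-i}^{-1}$ reverses the orientation between vertices of $[k+1]$ and entries of $\bfa$, so that my $\partial_0$-face corresponds to the ``$a_{k+1}$-action'' term of $d(h^{(k)})$ rather than the ``sub-tuple'' term. The cleanest way to organise this is in the language of a simplicial/Stokes formula for bar complexes applied to the cylinder functor $H_\bfa$; a fully rigorous version of this calculation is precisely the ``variant of Stokes' formula'' that the authors explicitly defer to a separate article.
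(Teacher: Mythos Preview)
Your proposal is correct and follows exactly the route the paper indicates: the paper gives no proof of Proposition~\ref{P:relations} beyond the remark that ``this proposition can be proved using a variant of Stokes' formula for cochains,'' and your outline is precisely a fleshed-out version of that Stokes argument, with the product-boundary decomposition of $\bfg\times[k+1]$ supplying the horizontal ($d_{\bbM^\bullet}$-side) and vertical ($d(h^{(k)})$-side) contributions. Your face-by-face identification of the vertical boundary with the three groups of terms in the explicit formula following the proposition is accurate, and you are right that the remaining work is purely sign bookkeeping, which the paper itself defers to a separate article.
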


In the above formula,  the term $d_{\bbM^{\bullet}}(h^{(k+1)})$ means that we apply 
$d_{\bbM^{\bullet}}$ to the values (in $\bbM^{\bullet}$) of the cochain $h^{(k+1)}$.
The differential on the right hand side of the formula is the differential of the
cohomology complex $\oC^{\bullet}(G,\bbM^{\bullet})$ of the (graded) right $G$-module  
$\bbM^{\bullet}$.
 
More explicitly, let $f\in \oC^{n+k}(G,M)$ be an $(n+k)$-cochain. Then, 
Proposition \ref{P:relations} states that, for every $\bfa \in G^{\times (k+1)}$, 
we have the following equality of $n$-cochains:
  \begin{eqnarray*}
    (-1)^{(k+1)} h_{a_1,\ldots,a_{k+1},df}-dh_{a_1,\ldots,a_{k+1},f}
       = & h_{a_2,\ldots,a_{k+1},f} + \\
      & \underset{1\leq i \leq k}\sum(-1)^ih_{a_1,\ldots,a_ia_{i+1},\ldots,a_{k+1},f} + \\
      &  (-1)^{k+1}h_{a_1,\ldots,a_{k},f}^{a_{k+1}}.
  \end{eqnarray*}

\begin{cor}{\label{C:left}} 
Let $f\in \oC^{n+k}(G,M)$ be an $(n+k)$-cocycle. Then, for every
$\bfa \in G^{\times (k+1)}$, the $n$-cochain 
    \[h_{a_2,\ldots,a_{k+1},f} 
         + \underset{1\leq i \leq k}\sum(-1)^ih_{a_1,\ldots,a_ia_{i+1},\ldots,a_{k+1},f}  
         + (-1)^{k+1}h_{a_1,\ldots,a_{k},f}^{a_{k+1}}\]
is a coboundary. In fact, it is the coboundary of $-h_{a_1,\ldots,a_{k+1},f}$.       
\end{cor}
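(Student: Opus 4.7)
The plan is to derive the corollary as an immediate specialization of Proposition \ref{P:relations}, whose explicit unpacked form is stated just above in the excerpt. The essential observation is that once $f$ is assumed to be a cocycle, one of the two terms on the left-hand side of that explicit identity vanishes, and what remains is precisely the statement of the corollary.

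First, I would write down Proposition \ref{P:relations} in its unpacked form for $f \in \oC^{n+k}(G,M)$ and $\bfa = (a_1, \ldots, a_{k+1}) \in G^{\times(k+1)}$:
\[ (-1)^{k+1} h_{a_1,\ldots,a_{k+1},df} - d\, h_{a_1,\ldots,a_{k+1},f} = h_{a_2,\ldots,a_{k+1},f} + \sum_{i=1}^{k}(-1)^i h_{a_1,\ldots,a_ia_{i+1},\ldots,a_{k+1},f} + (-1)^{k+1} h_{a_1,\ldots,a_k,f}^{a_{k+1}}. \]

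Second, I would invoke the hypothesis that $f$ is an $(n+k)$-cocycle, so $df = 0$. The defining formula \eqref{Eq:haf}, namely $h_{\bfa,f}(\bfg) = f(H_{\bfa}(\bfg \times [k]))$, is manifestly $\Z$-linear in its last argument, since the chain $H_{\bfa}(\bfg \times [k])$ does not depend on $f$; in particular $h_{a_1,\ldots,a_{k+1},\,0} = 0$. Substituting this into the displayed identity yields
\[ -d\, h_{a_1,\ldots,a_{k+1},f} = h_{a_2,\ldots,a_{k+1},f} + \sum_{i=1}^{k}(-1)^i h_{a_1,\ldots,a_ia_{i+1},\ldots,a_{k+1},f} + (-1)^{k+1} h_{a_1,\ldots,a_k,f}^{a_{k+1}}, \]
which is exactly the assertion that the right-hand side is the coboundary $d(-h_{a_1,\ldots,a_{k+1},f})$.

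There is no serious obstacle beyond invoking Proposition \ref{P:relations}: the only routine check is the vanishing of $h_{a_1,\ldots,a_{k+1},df}$ when $df = 0$, which is immediate from linearity in the cochain argument. The genuine content of the appendix lies in Proposition \ref{P:relations} itself, where the Stokes-type identity for cochains on the product $[n] \times [k+1]$ and the shuffle decomposition of its fundamental class are used to compare the cylinders $\bfg \times [k+1]$ to the various faces obtained by contracting edges of the ladder $\mcG \times [k+1]$; once that proposition is granted, the corollary is a one-line consequence of $df = 0$.
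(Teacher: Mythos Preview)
Your proposal is correct and is exactly the intended derivation: the paper states the corollary without proof because it follows immediately from the explicit form of Proposition~\ref{P:relations} upon setting $df=0$. Your observation that $h_{a_1,\ldots,a_{k+1},0}=0$ by linearity of \eqref{Eq:haf} in $f$ is the only thing to check, and it is indeed immediate.
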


\begin{ex}
Let us examine Corollary \ref{C:left} for small values of $k$. 
 \begin{itemize}
   \item[i)] For $k=0$, the statement is that, for every cocycle $f$,
    $f-f^a$ is a coboundary. In fact, it is the coboundary of $-h_{f,a}$.
    We have already seen this in Proposition \ref{P:homotopy}.
   \item[ii)] For $k=1$, the statement is that, for every cocycle $f$, the cochain
       \[h_{b,f}-h_{ab,f}+h_{a,f}^b \]  
    is a coboundary. In fact, it is the coboundary of $-h_{a,b,f}$.
 \end{itemize}
\end{ex}

\subsection{Explicit formula for $h_{a_1,\ldots,a_k,f}$}{\label{SS:Eplicit}}  
Let $f \: G^{\times (n+k)} \to M$ be an $(n+k)$-cochain, and 
$\bfa:=(a_1,a_2,\ldots,a_{k})  \in G^{\times k}$. Then, by (\ref{Eq:haf}),
the effect of  the $n$-cochain $h_{a_1,\ldots,a_k,f}$ on 
an $n$-tuple $\bfx:=(x_0,x_1,\ldots,x_{n-1}) \in G^{\times n}$
is given by:
     \[h_{a_1,\ldots,a_k,f}(x_0,x_1,\ldots,x_{n-1})
           =\underset{P}\sum (-1)^{|P|}f(\bfx^{P}),\]
where $\bfx^{P}$ is the $(n+k)$-tuple obtained by the following procedure.

Recall that $P$ is a path from $(0,0)$ to $(n,k)$ in the $n$ by $k$ grid. The
$l^{\text{th}}$ component $\bfx^{P}_l$ of $\bfx^{P}$ is determined by the 
$l^{\text{th}}$ segment on the path $P$. Namely, suppose that the coordinates 
of the starting point of this segment are  $(s,t)$. Then, 
       \[\bfx^{P}_l=a_{k-t}^{-1}\] 
if the segment is vertical, and 
       \[\bfx^{P}_l=(a_{k-t+1}\cdots a_k)x_s(a_{k-t+1}\cdots a_k)^{-1},\] 
if the segment is horizontal. Here, we use the convention that $a_0=1$.

The following example helps visualize $\bfx^{P}$:
   \[\xymatrix@C=63pt@R=20pt@M=3pt{ 
              &   &  &  &  &       \\
              &   &  &  &  &  \ar[u]^{a_1^{-1}}       \\
              &   &  &  \ar[r]_{(a_3a_4)x_3(a_3a_4)^{-1}}            &  
              \ar[r]_{(a_3a_4)x_4(a_3a_4)^{-1}} & \ar[u]^{a_2^{-1}}\\
              &   &  \ar[r]_{a_4x_2a_4^{-1}}    &  \ar[u]^{a_3^{-1}} &  &\\
              \ar[r]_{x_0} & \ar[r]_{x_1}       &  \ar[u]^{a_4^{-1}} &  &  &  &}\]
The corresponding term is
     \[-f(x_0,x_1,a_4^{-1},a_4x_2a_4^{-1},
         a_3^{-1},(a_3a_4)x_3(a_3a_4)^{-1},(a_3a_4)x_4(a_3a_4)^{-1},
                                       a_2^{-1},a_1^{-1}).\]
The sign of the path is determined by the parity  of the number of squares 
in the $n$ by $k$ grid that sit above the path $P$ (in this case $15$).

\section*{Acknowledgements}
I (M.K) owe a tremendous debt of gratitude to many people for conversations, communications, and tutorials about a continuous stream of facts and theories that I barely understand even now. These include John Baez, Alan Barr, Bruce Bartlett, Jean Bellissard, Philip Candelas, John Coates, Hee-Joong Chung, Tudor Dimofte,  Dan Freed, Sergei Gukov, Jeff Harvey, Yang-Hui He, Lars Hesselholt, Mahesh Kakde, Kazuya Kato, Dohyeong Kim, Philip Kim, Kobi Kremnitzer, Julien March\'e, Behrang Noohi, Xenia de la Ossa,  Jaesuk Park, Jeehoon Park, Alexander Schekochihin, Alexander Schmidt, Urs Schreiber, Graeme Segal,  Adam Sikora, Peter Shalen, Romyar Sharifi, Junecue Suh, Kevin Walker, Andrew Wiles, and Hwajong Yoo. 

\end{flushleft}

\end{document}